\renewcommand\eqref[1]{(\ref{#1})} 
\title[A survey of Hardy type inequalities on homogeneous groups]{A survey of Hardy type inequalities on homogeneous groups}
\author[D. Suragan]{Durvudkhan Suragan}
\address{
	Durvudkhan Suragan:
	\endgraf
	Department of Mathematics
	\endgraf
 Nazarbayev University
	\endgraf
	53 Kabanbay Batyr Ave, Astana 010000
	\endgraf
	Kazakhstan
	\endgraf
	{\it E-mail address} {\rm durvudkhan.suragan@nu.edu.kz}}
\subjclass[2010]{39B62, 39B99, 22E30.}
\keywords{Hardy inequality, sharp remainder, Heisenberg group, stratified group, homogeneous group}
\thanks{The author was supported in parts by the Nazarbayev University program 091019CRP2120 and the Nazarbayev University grant 240919FD3901. No new data was collected or generated during the course of this research.}
\newtheoremstyle{theorem}
{10pt}          
{10pt}  
{\sl}  
{\parindent}     
{\bf}  
{. }    
{ }    
{}     
\theoremstyle{theorem}
\numberwithin{equation}{section}
\theoremstyle{plain}
\newtheorem{thm}{Theorem}[section]
\newtheorem{lem}[thm]{Lemma}
\theoremstyle{definition}
\newtheorem{defn}[thm]{Definition}
\newtheoremstyle{defi}
{10pt}          
{10pt}  
{\rm}  
{\parindent}     
{\bf}  
{. }    
{ }    
{}     
\theoremstyle{defi}
\begin{document}
		\begin{abstract}
	In this review paper, we survey Hardy type inequalities from the point of view of Folland and Stein's homogeneous groups. Particular attention is paid to Hardy type inequalities on  stratified groups which give a special class of homogeneous groups. In this environment, the theory of Hardy type inequalities becomes intricately intertwined with the properties of sub-Laplacians and more general subelliptic partial differential equations. Particularly, we discuss the Badiale-Tarantello conjecture and a conjecture on the geometric Hardy inequality in a half-space of the Heisenberg group with a sharp constant.  
	\end{abstract}
	\maketitle
	
\section{Introduction}

	In 1918, G. H. Hardy proved an inequality (discrete and in one variable) \cite{Hardy} now bearing his
	name, which in $\mathbb{R}^{n}$ can be formulated as
	$$
	\left\|\frac{f}{{ \|x\|}}\right\|_{L^{2}(\mathbb{R}^{n})}\leq { \frac{2}{n-2}}\|\nabla f\|_{L^{2}(\mathbb{R}^{n})}, \;\;n\geq 3,
	$$
	where $\nabla$ is the standard gradient in $\mathbb{R}^{n}$, $f\in C_{0}^{\infty}(\mathbb{R}^{n})$, $\|x\|$ is the Euclidean norm, and the constant $\frac{2}{n-2}$ is known to be sharp.
	Note that the multidimensional version of the Hardy inequality was proved by J. Leray \cite{Leray}. 
	
	The Hardy inequality has many applications in the analysis of linear and nonlinear PDEs, for example, in existence and nonexistence results for second order partial differential equations of the form:
	
	$$
	\begin{array}{c}{0} \\ {u_{t}} \\ {u_{t t}}\end{array} \Bigg\}-\Delta u={ \lambda} \frac{|u|^{s}}{{\|x\|^{2}}}.
	$$
	The criteria on existence (or nonexistence) of a solution depends on a relation between the constants $\lambda$ and $ \frac{2}{n-2}$.
	
	In the equation, instead of $ \frac{1}{\|x\|^{2}}$ (Hardy potential) one may have more general function which motivates to study   weighted versions of the Hardy inequality. On the other hand, one may consider some different operators instead of the Laplacian, say, the $p$-Laplacian.
	
	The $L^p$-version of the Hardy inequality (used e.g. for $p$-Laplacian) takes the form
	\begin{equation}\nonumber
	\left\|\frac{f}{\|x\|}\right\|_{L^{p}(\mathbb{R}^{n})}\leq \frac{p}{n-p}\|\nabla f\|_{L^{p}(\mathbb{R}^{n})}, \;\;1< p<n,
	\end{equation}
	where the constant $\frac{p}{n-p}$ is sharp.
	There are already many excellent presentations on the (classical) Hardy inequalities and their extensions, see e.g. \cite{BEL15}, \cite{Davies},  \cite{Mazya}, and \cite{Opic} as well as references therein. 
	
	The purpose of the present review paper is to offer a brief survey of Hardy type inequalities on homogeneous groups. These groups give one of most general classes of  noncommutative nilpotent Lie groups. The intersection of analysis of Hardy type inequalities and theory of homogeneous groups is a beautiful area of mathematics with links to many other subjects.

	Since L. H\"ormander's fundamental work \cite{Hor} the operators of the type sum of squares of vector fields have been studied intensively, and today's literature on the subject is quite large. Much of the development in the field has been connected to the development of analysis on the homogeneous groups, following the ideas of E. Stein's talk at ICM 1970 \cite{Stein}. Since then continuing through the rest of his life, a substantial part of E. Stein's research is related to  analysis on homogeneous groups  (see \cite{Folland19}). 
		
	Among many, one of the motivations behind doing analysis on the homogeneous groups is the {``distillation of ideas and results of harmonic analysis depending only on the group and dilation structures"} \cite{FS1}.

		In the 1990s, a lot of work concerning Hardy inequalities was already developed in the context of the elliptic operators, but not very much had been done in the framework of (nonelliptic) subellipticity, in particular, for the Heisenberg sub-Laplacians.

	Note that the sub-Laplacian on the nilpotent Lie groups are (left invariant homogeneous) subelliptic differential operators and it is known that it is elliptic if and only if the Lie group is Abelian (Euclidean).
	
	 N. Garofalo and E. Lanconelli \cite{GL} have an important contribution to the development of Hardy inequalities on the Heisenberg group with their original approach, which is based on a far-reaching use of the fundamental solutions. Later this idea was extended to general stratified groups by several authors. There is also another approach in the theory of Hardy inequalities on stratified groups, the so-called horizontal estimates, which was suggested by L. D'Ambrosio on the Heisenberg group \cite{Ambrosio04} and on stratified groups \cite{Ambrosio05}. We give further discussions in this direction in Section \ref{Sec2}. 
	 The general theory of Hardy type inequalities in the setting of homogeneous groups is reviewed in Section \ref{Sec3}.
	 
	  These are notes mainly from my lecture at the 12th ISAAC Congress in Aveiro (2019). The lecture was partially based on our recent open access book with the title ``Hardy inequalities on homogeneous groups" with Michael Ruzhansky \cite{RS_book}. 
	 
	 \section{Hardy type inequalities on stratified groups}\label{Sec2}
	 A (connected and simply connected) Lie group $\mathbb{G}$ is { graded} if its Lie algebra: 
	 $$\mathfrak{g}=\bigoplus_{i=1}^{\infty}\mathfrak{g}_{i},$$
	 where $\mathfrak{g}_{1}, \mathfrak{g}_{2},...,$ are vector subspaces of $\mathfrak{g}$, only finitely many  not  $\{0\}$, and
	 $$[\mathfrak{g}_{i},\mathfrak{g}_{j}]\subset \mathfrak{g}_{i+j} \;\;\forall i, j\in \mathbb{N}.$$
	 
	 If $\mathfrak{g}_{1}$ generates the Lie algebra $\mathfrak{g}$ through commutators, the group is said to be {stratified} (see, e.g. \cite{FR}).
	 
	 {\bf Example 1 (Abelian case)}. The Euclidean group $(\mathbb{R}^{n},+)$ is graded: its Lie algebra $\mathbb{R}^{n}$ is trivially graded. Obviously, it is also stratified.  
	 
	 {\bf Example 2 (Heisenberg group)}. The Heisenberg group $\mathbb{H}^{n}$ is stratified: its Lie algebra $\mathfrak{h}_{n}$ can be decomposed as $\mathfrak{h}_{n}=\mathfrak{g}_{1}\oplus \mathfrak{g}_{2}$ where $\mathfrak{g}_{1}=\oplus_{j=1}^{n}\mathbb{R}X_{j}\oplus \mathbb{R}Y_{j}$ and $\mathfrak{g}_{2}=\mathbb{R}T$, where
	 \begin{equation}\nonumber
	 X_{j}=\partial_{x_{j}}-\frac{y_{j}}{2}\partial_{t},\;\;Y_{j}=\partial_{y_{j}}+
	 \frac{x_{j}}{2}\partial_{t},\;\;j=1,\ldots,n,\;\;T=\partial_{t}.
	 \end{equation}
	
Note that the concept of stratified groups was introduced for the first time by Gerald Folland in 1975 \cite{Folland75}. However,  in the literature of sub-Riemannian geometry, stratified groups are commonly called (homogeneous) Carnot groups.
	
	Let $\mathbb{G}$ be a stratified group, i.e. there is $\mathfrak{g}_{1}\subset \mathfrak{g}$ (the first stratum), with its basis $X_1,\ldots,X_N$ generating its Lie algebra $\mathfrak{g}$ through their commutators. Then the sub-Laplacian
	$$\mathcal{L}:=X_1^2+\cdots+X_N^2$$
	is subelliptic, and $\nabla_{H}=(X_1,\ldots,X_n)$ is the so-called horizontal gradient.
	
	\medskip
	
	Folland proved that $\mathcal{L}$ has a fundamental solution $=Cd(x)^{2-Q}$ for some homogeneous quasi-norm $d(x)$ called the $\mathcal{L}$-gauge. Here $Q$ is homogeneous dimension of $\mathbb{G}$.
	
	As we mentioned in the introduction, by using the fundamental solution (i.e. the $\mathcal{L}$-gauge) of the sub-Laplacian $\mathcal{L}$,  in 1990 Garofalo and Lanconelli \cite{GL} proved the Hardy inequality on the Heisenberg group \footnote{In the case of the Heisenberg group, $\mathcal{L}$-gauge is called a Kaplan distance.}, then in 2005 L. D'Ambrosio \cite{Ambrosio05} and independently in 2008 Goldstein and Kombe \cite{GK08} established the Hardy inequality on the general stratified groups:
	\begin{equation}\label{fundsol}
	\left\|\frac{f}{d(x)}\right\|_{L^{2}(\mathbb{G})}\leq \frac{2}{Q-2}\|\nabla_{H} f\|_{L^{2}(\mathbb{G})}, \;\;Q\geq 3.
	\end{equation}
In \cite{RS17Kac}, we obtained the Hardy inequality on the general stratified groups with boundary terms.  In general, this approach can be called a \emph{fundamental solution approach}.
On the other hand, there is another approach to obtain the Hardy inequality on the stratified groups which is a \emph{horizontal estimate approach}.  	
In 2004, L. D'Ambrosio proved a ``horizontal" version of the Hardy inequality on the Heisenberg  group \cite{Ambrosio04} (see also \cite{Ambrosio05}). In 2017, we extended this result for the general stratified groups \cite{Ruzhansky-Suragan:JDE} in the form 
	\begin{equation}\label{horest}
	\left\|\frac{f}{|x'|}\right\|_{L^{2}(\mathbb{G})}\leq \frac{2}{N-2}\|\nabla_{H} f\|_{L^{2}(\mathbb{G})}, \;\;N\geq 3,
	\end{equation}
 with the Euclidean norm $|x'|$ on the first stratum and $x=(x',x'')$. 	
 Here $N$ is (topological) dimension of the first stratum of $\mathbb{G}$.
	The $L^p$-version of \eqref{fundsol} on the Heisenberg group and the stratified groups were proved by using  different methods by Niu, Zhang and Wang in 2001 \cite{NZW}, L. D'Ambrosio in 2004 \cite{Ambrosio04} and in 2005 \cite{Ambrosio05}, 
	Adimurthi and Sekar in 2006 \cite{Adimurthi}, Danielli, Garofalo and Phuc in 2011 \cite{DGP11}, as well as Jin and Shen in 2011 \cite{JS11}:
	\begin{equation}\label{fundsolLp}
	\left\|\frac{f}{d(x)}\right\|_{L^{p}(\mathbb{G})}\leq \frac{p}{Q-p}\|\nabla_{H} f\|_{L^{p}(\mathbb{G})}, \;\;Q\geq 3,\;\;1<p<Q.
		\end{equation}
	Moreover, the $L^p$-version of \eqref{horest} was proved by L. D'Ambrosio on the Heisenberg group \cite{Ambrosio04} and then it  extended on the general stratified groups in \cite{Ambrosio05}. We obtained its extention on the general stratified groups \cite{Ruzhansky-Suragan:JDE} by a different method in the form
	\begin{equation}\label{horestLp}
	\left\|\frac{f}{|x'|}\right\|_{L^{p}(\mathbb{G})}\leq \frac{p}{N-p}\|\nabla_{H} f\|_{L^{p}(\mathbb{G})}, \;\;Q\geq 3,\;\;1<p<N.
	\end{equation}
	Both constants in \eqref{fundsolLp} and \eqref{horestLp} are sharp for functions from, say, $C_{0}^{\infty}(\mathbb{G})$. In particular, a special case of the horizontal estimate implies the Badiale-Tarantello conjecture.

{\bf Badiale-Tarantello conjecture}:
Let $x=(x',x'')\in\mathbb{R}^{N}\times\mathbb{R}^{n-N}.$
Badiale and Tarantello \cite{BT02} proved that for  $1< p<N\leq n$ there exists a constant $C_{n,N,p}$ such that
\begin{equation}\label{BT}
\left\|\frac{1}{|x'|}f
\right\|_{L^{p}(\mathbb{R}^{n})}\leq C_{n,N,p}\left\|\nabla f\right\|_{L^{p}(\mathbb{R}^{n})}.
\end{equation}
Clearly, for $N=n$ this gives the classical Hardy inequality with
the best constant
$$C_{n,p}=\frac{p}{n-p}.$$
It was conjectured by Badiale and Tarantello that
the best constant in \eqref{BT} is given by
\begin{equation}
C_{N,p}=\frac{p}{N-p}.
\end{equation}
This conjecture was proved by Secchi, Smets and Willen in 2003 \cite{SSW03}. 

As a consequence of the horizontal estimate techniques, we also gave a new proof of the Badiale-Tarantello conjecture in  \cite{Ruzhansky-Suragan:JDE}.

 {\bf Hardy inequality in half-space}:
		Let us recall the Hardy inequality in a half-space of $\mathbb{R}^n$:
		\begin{equation}\label{HS1}
		\left( \frac{p-1}{p}\right)^p \int_{\mathbb{R}^n_+} \frac{|u|^p}{x_n^p}dx\leq 	\int_{\mathbb{R}^n_+} |\nabla u|^p dx,\quad p>1,
		\end{equation}
		for every function $u \in C_0^{\infty}(\mathbb{R}^n_+)$, where $\nabla$ is the usual Euclidean gradient and 
		$$\mathbb{R}^n_+:= \{(x',x_n)|x':=(x_1,\ldots,x_{n-1})\in\mathbb{R}^{n-1}, x_n>0 \},\, n \in \mathbb{N}.$$

		There is a number of studies related to inequality \eqref{HS1} by Maz'ya, Davies, Opic-Kufner (see, e.g. \cite{Davies},  \cite{Mazya}, and \cite{Opic}) and others.

		Filippas, Maz'ya and Tertikas in 2007 \cite{FMT07} established the Hardy-Sobolev inequality in the following form
		\begin{equation}\label{HS2}
		C \left( \int_{\mathbb{R}^n_+} |u|^{p^*} dx\right)^{ \frac{1}{p^*}} \leq 	\left(\int_{\mathbb{R}^n_+} |\nabla u|^p dx - \left( \frac{p-1}{p}\right)^p \int_{\mathbb{R}^n_+} \frac{|u|^p}{x_n^p}dx\right)^{\frac{1}{p}},
		\end{equation}
		for all function $u \in C_0^{\infty}(\mathbb{R}^n_+)$, where $p^*=\frac{np}{n-p}$ and $2\leq p<n$. For a different proof of this inequality, see Frank and Loss \cite{FL12}. Obviously, \eqref{HS1} implies from \eqref{HS2}.

{\bf Version on the Heisenberg group}: Let $\mathbb{H}^n$ be the Heisenberg group, that is, the set $\mathbb{R}^{2n+1}$ equipped with the group law 
		$$
		(z,t) \circ (\widetilde{z},\widetilde{t}\,) := (z + \widetilde{z}, t + \widetilde{t}+2\,\text{Im}\langle z,\widetilde{z}\rangle ),
		$$
		where $
		(z, t)=\left(z_{1}, \ldots, z_{n}, t\right)=\left(x_{1}, y_{1}, \ldots, x_{n}, y_{n}, t\right)
		 \in \mathbb{H}^n$, $x\in \mathbb{R}^{n}$, $y\in \mathbb{R}^{n}$, $t\in \mathbb{R}$ and $\mathbb{R}^{2n}$ is identified by $\mathbb{C}^{n}$.
		
		A Hardy inequality in a half-space of the Heisenberg group was shown by Luan and Yang in 2008 \cite{LY08} in the form
		\begin{equation}\label{LY_ineq}
		\int_{\mathbb{H}^{n}_{t>0}}  \frac{|x|^2+|y|^2 }{t^2} |u|^2 d\xi \leq 	\int_{\mathbb{H}^{n}_{t>0}} |\nabla_{H}u|^2 d\xi,
		\end{equation} 
		for every function $u \in C_0^{\infty}(\mathbb{H}^{n}_{t>0})$.
		
		In 2016, Larson generalised the above inequality to any half-space of the Heisenberg group \cite{Larson}: 
		\begin{equation}\label{L2Hardy}
		\frac{1}{4} \int_{\mathbb{H}^+}  \frac{\sum_{i=1}^{n}\langle X_i(\xi), \nu \rangle^2+\langle Y_i(\xi), \nu \rangle^2 }{dist(\xi,\partial \mathbb{H}^+)^2} |u|^2 d\xi \leq  	\int_{\mathbb{H}^+} |\nabla_{H}u|^2 d\xi,
		\end{equation}
		where $X_i$ and $Y_i$ (for $i=1,\ldots,n$) are left-invariant vector fields on the Heisenberg group, $\nu$ is the unit vector. However, the following conjecture remained open (see, e.g. \cite{Larson}).

	{\bf Conjecture:}
			The  $L^p$-version of the above Hardy inequality (in a half-space) should be valid, that is,
			\begin{equation*}
			\left(\frac{p-1}{p}\right)^p \int_{\mathbb{H}^+}  \frac{\mathcal{W}(\xi)^p }{dist(\xi,\partial \mathbb{H}^+)^p} |u|^p d\xi\leq 	\int_{\mathbb{H}^+} |\nabla_{H}u|^p d\xi.
			\end{equation*}
			Also, the constant should be sharp. 

		Here we have the so-called angle function  (see \cite{Garofalo})
		\begin{equation*}
		\mathcal{W}(\xi) := \sqrt{ \sum_{i=1}^{n}\langle X_i(\xi),\nu\rangle^2+\langle Y_i(\xi),\nu\rangle^2}.
		\end{equation*}
The following theorem approves the above conjecture: 
	\begin{thm}\cite{RSS18}
			Let $\mathbb{H}^+$ be a half-space of the Heisenberg group $\mathbb{H}^n$. For $2\leq p<Q$  with $Q=2n+2$, there exists some $C>0$ such that for every function $u \in C_0^{\infty}(\mathbb{H}^+)$ we have 
			\begin{multline}
			C \left( \int_{\mathbb{H}^+} |u|^{p^*} d\xi\right)^{\frac{1}{p^*}} \\ \leq \left( \int_{\mathbb{H}^+} |\nabla_{H} u|^p d\xi - \left(\frac{p-1}{p}\right)^p \int_{\mathbb{H}^+} \frac{ \mathcal{W}(\xi)^p}{dist(\xi,\partial \mathbb{H}^+)^p} |u|^p d\xi \right)^{\frac{1}{p}},
			\end{multline}
			where $p^* := Qp/(Q-p)$ and $dist(\xi,\partial \mathbb{H}^+) := \langle \xi, \nu \rangle - d$.
	\end{thm}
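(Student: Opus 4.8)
The plan is to prove the inequality by combining a ground-state substitution with the Folland--Stein Sobolev inequality; the substitution will simultaneously establish the sharp $L^p$ Hardy inequality (the deficit on the right-hand side turns out to be manifestly nonnegative) and reduce the Sobolev improvement to a single weighted Sobolev inequality. Writing $\rho(\xi):=\mathrm{dist}(\xi,\partial\mathbb{H}^+)=\langle\xi,\nu\rangle-d$, the first step is to record three geometric identities that tame the half-space geometry. Since $\rho$ is affine and the fields $X_i,Y_i$ have at most linear coefficients, a direct computation gives $\Delta_H\rho=0$ (horizontal harmonicity), while by the very definition of the angle function $|\nabla_H\rho|=\mathcal{W}$. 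The third, less obvious, identity is the orthogonality $\langle\nabla_H\mathcal{W},\nabla_H\rho\rangle=0$, which follows from the explicit Heisenberg commutation relations after differentiating $\mathcal{W}^2=\sum_i (X_i\rho)^2+(Y_i\rho)^2$. Together these three facts show that $\phi:=\rho^{(p-1)/p}$ is an \emph{exact} ground state of the Hardy operator, namely $-\Delta_{H,p}\phi=\left(\frac{p-1}{p}\right)^p\frac{\mathcal{W}^p}{\rho^p}\phi^{p-1}$, where $\Delta_{H,p}$ is the horizontal $p$-Laplacian; the three identities are exactly what is needed to kill the three terms that arise when expanding this divergence.

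With $\phi$ in hand, I would substitute $u=\phi v$ and expand $|\nabla_H u|^p=|v\nabla_H\phi+\phi\nabla_H v|^p$. For $p\ge 2$ the pointwise convexity inequality $|A+B|^p\ge|A|^p+p|A|^{p-2}\langle A,B\rangle+c_p|B|^p$ (with $c_p>0$), applied with $A=v\nabla_H\phi$ and $B=\phi\nabla_H v$, gives a pointwise lower bound. Upon integrating, the leading term $\int|v|^p|\nabla_H\phi|^p$ and the cross term combine: integrating the cross term by parts and invoking precisely the ground-state equation for $\phi$ cancels it against $\int|v|^p|\nabla_H\phi|^p$ and reproduces exactly the sharp Hardy term $\left(\frac{p-1}{p}\right)^p\int\frac{\mathcal{W}^p}{\rho^p}|u|^p$. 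This is where the hypothesis $p\ge 2$ enters, and it explains why the same constant $\left(\frac{p-1}{p}\right)^p$ appears on both sides. What survives is the clean estimate
\begin{equation*}
\int_{\mathbb{H}^+}|\nabla_H u|^p\,d\xi-\left(\frac{p-1}{p}\right)^p\int_{\mathbb{H}^+}\frac{\mathcal{W}^p}{\rho^p}|u|^p\,d\xi\ \ge\ c_p\int_{\mathbb{H}^+}\rho^{\,p-1}|\nabla_H v|^p\,d\xi,
\end{equation*}
so the full deficit controls the weighted energy $\int\rho^{p-1}|\nabla_H v|^p=\int|\phi\nabla_H v|^p$ (for $p=2$ this is an exact identity with $c_2=1$, which also recovers Larson's inequality).

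It therefore remains to establish the weighted Folland--Stein--Sobolev inequality
\begin{equation*}
\left(\int_{\mathbb{H}^+}|\phi v|^{p^*}\,d\xi\right)^{p/p^*}\ \le\ C\int_{\mathbb{H}^+}\rho^{\,p-1}|\nabla_H v|^p\,d\xi,\qquad p^*=\frac{Qp}{Q-p},
\end{equation*}
whose left-hand side is exactly $\|u\|_{L^{p^*}}^{p}$. This last step is the main obstacle. The difficulty is that the weight $\rho^{p-1}$ is \emph{critical}: it sits at the endpoint of the Muckenhoupt range, which is the analytic shadow of the sharpness of the Hardy constant, so neither the unweighted Folland--Stein inequality nor standard $A_p$-weighted Sobolev theory applies directly. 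I would attack it by Maz'ya's truncation (dyadic level-set) method: decompose $u$ along the level sets $\{2^k<|u|\le 2^{k+1}\}$, apply the sharp half-space Hardy inequality to each truncated piece---where the function is uniformly separated from the ground-state profile $\phi$ and hence enjoys a quantitative Hardy gap---and sum the resulting local bounds using the unweighted Folland--Stein inequality on $\mathbb{H}^n$, whose exponent is exactly $p^*=Qp/(Q-p)$. The identities $\Delta_H\rho=0$ and $|\nabla_H\rho|=\mathcal{W}$ are precisely what control the error terms produced by the truncation. Assembling the local estimates and undoing the substitution $u=\phi v$ then delivers the claimed Hardy--Sobolev inequality with the stated critical exponent.
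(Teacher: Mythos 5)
The survey states this theorem without proof, merely citing \cite{RSS18}, so your proposal can only be measured against the argument of that reference, which (like Frank and Loss \cite{FL12} in the Euclidean half-space) is indeed built on the ground-state substitution you describe. Your first two steps check out completely: for $\rho(\xi)=\langle\xi,\nu\rangle-d$ one really does have $X_i^2\rho=Y_i^2\rho=0$, $|\nabla_H\rho|=\mathcal{W}$, and $\langle\nabla_H(\mathcal{W}^2),\nabla_H\rho\rangle=\nu_t\sum_j\bigl((Y_j\rho)(X_j\rho)-(X_j\rho)(Y_j\rho)\bigr)=0$, so $\phi=\rho^{(p-1)/p}$ satisfies the horizontal $p$-Laplacian ground-state equation with the constant $\left(\frac{p-1}{p}\right)^p$, and the convexity expansion for $p\ge2$ plus one integration by parts (legitimate, since $u\in C_0^{\infty}(\mathbb{H}^+)$ is supported away from $\partial\mathbb{H}^+$) yields the deficit bound $\int|\nabla_Hu|^p\,d\xi-\left(\frac{p-1}{p}\right)^p\int\mathcal{W}^p\rho^{-p}|u|^p\,d\xi\ge c_p\int\rho^{p-1}|\nabla_Hv|^p\,d\xi$. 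Up to this point you have reproved Larson's inequality \cite{Larson} and its $L^p$ analogue, i.e.\ the Hardy half of the cited result.

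The genuine gap is in your third step, which is the actual content of the Hardy--Sobolev improvement. The weighted inequality $\bigl(\int_{\mathbb{H}^+}\rho^{(p-1)p^*/p}|v|^{p^*}\,d\xi\bigr)^{p/p^*}\le C\int_{\mathbb{H}^+}\rho^{p-1}|\nabla_Hv|^p\,d\xi$ is essentially as hard as the theorem itself, and your sketch does not prove it. Maz'ya truncation converts a capacitary or weak-type $(p,p^*)$ estimate into the strong-type one, but you have not produced the required input: applying ``the sharp half-space Hardy inequality to each truncated piece'' cannot generate the weight $\rho^{(p-1)p^*/p}$ on the left-hand side, and the assertion that each dyadic layer of $u$ is ``uniformly separated from the ground-state profile and hence enjoys a quantitative Hardy gap'' is not a theorem: the Hardy constant is sharp and not attained, but there is no uniform spectral gap, and truncating $u$ in its range does not separate it from $\phi$ in any sense that improves the constant on that layer. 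In \cite{RSS18} (following the scheme of \cite{FL12} and \cite{FMT07}) this weighted Sobolev inequality is established by a separate, substantive argument; as written, your proposal delivers the sharp geometric $L^p$-Hardy inequality but not the stated $L^{p^*}$ remainder term. To close the proof you must either give a genuine derivation of the critical weighted Sobolev inequality (for instance by adapting the Frank--Loss reduction to the horizontal gradient on $\mathbb{H}^n$) or import it with a precise citation rather than a truncation heuristic.
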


{\bf Horizontal	Poincar\'e inequality.}	The ``horizontal" approach also implies the following Poincar\'e type inequality \cite{Ruzhansky-Suragan:JDE} on stratified groups:
		\begin{equation}\label{Lp-Poincare}
		\frac{|N-p|}{Rp}\left\|f
		\right\|_{L^{p}(\Omega)}\leq\left\|\nabla_{H} f\right\|_{L^{p}(\Omega)},\quad 1<p<\infty,
		\end{equation}
		for $f\in C^{\infty}_{0}(\Omega\backslash\{x'=0\})$ and $R=\underset{x\in\Omega}{\sup}|x'|$.
	
		For example, let us consider the blow-up solutions to the $p$-sub-Laplacian heat equation on the stratified group, that is,
		\begin{align}\label{main_eqn}
		\begin{cases}
		u_t(x,t) - \mathcal{L}_p u(x,t) = f(u(x,t)), \,\,\, & (x,t) \in \Omega \times (0,+\infty), \\ 
		u(x,t)  =0,  \,\,\,& (x,t) \in \partial \Omega \times [0,+\infty), \\
		u(x,0)  = u_0(x)\geq 0,\,\,\, & x \in \overline{\Omega}, 
		\end{cases}
		\end{align}
		where $f$ is locally Lipschitz continuous on $\mathbb{R}$, $f(0)=0$, and such that $f(u)>0$ for $u>0$. Here $\mathcal{L}_p$ is the $p$-sub-Laplacian.
		By using \eqref{Lp-Poincare} it can be proved that 
		nonnegative solution to \eqref{main_eqn} blows up at a finite time $T^*$.	
		Thus, inequality \eqref{Lp-Poincare} is a powerful tool 
		proving the existence or/and nonexistence (blow-up) of the solution of subelliptic partial differential equations. 
		
		However, in general, the constant \eqref{Lp-Poincare} is not optimal. When $p=2$ the optimal constant can be expressed in terms of the positive ground state (if it exists on a stratified group). For general stratified groups the question about positivity of the ground state is open.  
		
		It is important to note that for the first time the horizontal Poincar\'e inequality \eqref{Lp-Poincare} appears in  \cite[Theorem 2.12]{Ambrosio05}  which is valid for general vector fields (including general stratified groups setting).

		Let $\Omega\subset\mathbb{G}$ be an open set and we denote its boundary by $\partial\Omega$. The notation $u\in C^{1}(\Omega)$ means 
		$\nabla_{H} u\in C(\Omega)$. 
			Let $\Omega \subset \mathbb{G}$ be a set supporting the divergence formula on $\mathbb{G}$.
		Let $u\in C_{0}^{1}(\Omega)$ and $0<\phi\in C^{2}(\Omega).$
		We have 
			\begin{equation}
			\left|\nabla_{H} u-\frac{\nabla_{H} \phi}{\phi} u \right|^{2}=
			|\nabla_{H} u|^{2}-\frac{\nabla_{H} \phi}{\phi}  \nabla_{H} u^{2}+\frac{|\nabla_{H} \phi|^{2}}{\phi^{2}}u^{2}		\end{equation}
			and
			\begin{equation}
			-\frac{\nabla_{H} \phi}{\phi}  \nabla_{H} u^{2}
			=-\nabla_{H}\cdot \left(\frac{\nabla_{H} \phi}{\phi} u^{2} \right) +\frac{\mathcal{L} \phi}{\phi}u^{2}-\frac{|\nabla_{H} \phi|^{2}}{\phi^{2}}u^{2}.	\end{equation}
			These imply
			\begin{equation}
			\label{eq33}
			\left|\nabla_{H} u-\frac{\nabla_{H} \phi}{\phi} u \right|^{2}=|\nabla_{H} u|^{2}-\nabla_{H}\cdot \left(\frac{\nabla_{H} \phi}{\phi} u^{2} \right) +\frac{\mathcal{L} \phi}{\phi} u^{2},
			\end{equation}
			that is,
				\begin{equation}
		\int_{\Omega} 	\left|\nabla_{H} u-\frac{\nabla_{H} \phi}{\phi} u \right|^{2}dx=\int_{\Omega}\left(|\nabla_{H} u|^{2}-\nabla_{H}\cdot \left(\frac{\nabla_{H} \phi}{\phi} u^{2} \right) +\frac{\mathcal{L} \phi}{\phi} u^{2}\right) dx.
			\end{equation}	
Now by using the divergence formula (see, e.g. \cite{RS_book}) to the second term in the right hand side, we arrive at

	\begin{equation}\label{ineqG}
0\leq \int_{\Omega} \left|\nabla_{H} u-\frac{\nabla_{H} \phi}{\phi} u \right|^{2}dx= 	\int_{\Omega} \left(|\nabla_{H} u|^{2}+\frac{\mathcal{L} \phi}{\phi} |u|^{2}\right)dx
\end{equation}
for any $u\in C_{0}^{1}(\Omega)$ and $0<\phi\in C^{2}(\Omega).$
			Here	the equality case holds if and only if $u$ is proportional to $\phi$.
			Indeed, we have the equality case if and only if 
			$$0=\left|\nabla_{H} u-\frac{\nabla_{H} \phi}{\phi} u\right|^{2}=\left|\nabla_{H} \left( \frac{u}{\phi}\right)\right|^{2}\phi^{2},$$ 
			that is,
			$X_{k}\left(\frac{u}{\phi}\right)=0,\,\, k=1,...,N.$ Since any left invariant vector field  of $\mathbb{G}$ can be represented by Lie brackets of $\{X_{1},...,X_{N}\}$, we conclude that
			$\frac{u}{\phi}$ is a constant if and only if  $\left|\nabla_{H} \left( \frac{u}{\phi}\right)\right|=0$.

			Consider the following (spectral) problem for the minus Dirichlet sub-Laplacian:
		\begin{align} \label{eigen_1}
		\begin{cases} -\mathcal{L}\phi(\xi)=\mu \phi(\xi), & \xi \in \Omega,\; \; \Omega\subset\mathbb{G}, \\ \phi(\xi)=0, & \xi \in \partial \Omega. \end{cases}
		\end{align}
		
		Let both $\mu> 0$ and $\phi>0$ satisfy \eqref{eigen_1}, that is, $\frac{\mathcal{L} \phi}{\phi}=-\mu$.
		Then \eqref{ineqG} implies the sharp Poincar\'e (or Steklov) inequality on a stratified group $\mathbb{G}$ 
		
		$$
		\int_{\Omega} | u|^{2}dx\leq \frac{1}{\mu} \int_{\Omega} |\nabla_{H}u|^{2}dx.
		$$
		Note that if $\Omega$ is an open smooth bounded set of the Heisenberg group $\mathbb{H}^n$, then there exist $\mu>0$ and $\phi>0$, which are the first eigenvalue and the corresponding eigenfunction of the minus Dirichlet sub-Laplacian, respectively.

		One can iterate the above process to obtain higher order versions of equality \eqref{ineqG}. It is discussed for general real smooth vector fields in \cite{OS19}.
		In turn, this equality follows the proof of the higher order Poincar\'e inequality, 
		characterization of the best constant and its existence as well as characterization of nontrivial extremizers and their existence. Now we restate some results from \cite{OS19} in terms of stratified groups. We also briefly recall their proofs. 
			
		\begin{thm}\label{mainG} \cite{OS19}
			Assume that $\varphi>0$ is a positive eigenfunction of $-\mathcal{L}$ with an eigenvalue $\lambda$, that is, $-\mathcal{L}\varphi= \lambda \varphi$ in $\Omega \subset \mathbb{G}$. For every $u\in C_{0}^{\infty}(\Omega)$
		 the following identities are valid:
			\begin{multline}\label{maineven}
			\left|\nabla_{H}^{2m} u\right|^{2}-\lambda^{2 m} |u|^{2}=\sum_{j=0}^{m-1}\lambda^{2(m-1-j)} \left(\left|\mathcal{L}^{j+1} u+\lambda \mathcal{L}^{j} u\right|^{2}+2 \lambda\left|\nabla_{H} \mathcal{L}^{j} u-\frac{\nabla_{H} \varphi}{\varphi} \mathcal{L}^{j} u\right|^{2}\right)
			\\
			+\sum_{j=0}^{m-1}2 \lambda^{2(m-1-j)+1} \nabla_{H}\cdot\left(\frac{\nabla_{H} \varphi}{\varphi}(\mathcal{L}^{j} u)^{2}-\mathcal{L}^{j}u\nabla_{H}\mathcal{L}^{j}u\right),
			\end{multline}
			
			where $m=1,2,\ldots,$ and
			
			\begin{multline}\label{mainodd}
			\left|\nabla_{H}^{2m+1} u\right|^{2}-\lambda^{2 m+1} |u|^{2}=\left|\nabla_{H} \mathcal{L}^{m} u-\frac{\nabla_{H} \varphi}{\varphi} \mathcal{L}^{m} u\right|^{2}
			\\
			+\sum_{j=0}^{m-1} \lambda^{2(m-j)-1}\left(\left|\mathcal{L}^{j+1} u+\lambda \mathcal{L}^{j} u\right|^{2}+2 \lambda\left|\nabla_{H} \mathcal{L}^{j} u-\frac{\nabla_{H} \varphi}{\varphi} \mathcal{L}^{j} u\right|^{2}\right)
			\\
			+2 \sum_{j=0}^{m-1} \lambda^{2(m-j)} \nabla_{H}\cdot\left(\frac{\nabla_{H} \varphi}{\varphi}\left(\mathcal{L}^{j} u\right)^{2}-\mathcal{L}^{j}u\nabla_{H}\mathcal{L}^{j}u\right)+\nabla_{H}\cdot\left(\frac{\nabla_{H} \varphi}{\varphi}\left(\mathcal{L}^{m} u\right)^{2}\right),
			\end{multline}
			where $m=0,1,2,\ldots.$
		\end{thm}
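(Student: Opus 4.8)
The plan is to reduce both identities to two pointwise building blocks and then telescope. Since the divergence terms are displayed explicitly, no integration is needed at this stage: I would argue entirely pointwise, using only the product rule for the horizontal divergence, $\nabla_{H}\cdot(g F)=g\,\nabla_{H}\cdot F+\nabla_{H}g\cdot F$, together with the identity $\nabla_{H}\cdot(v\nabla_{H}v)=|\nabla_{H}v|^{2}+v\mathcal{L}v$, and the conventions $\nabla_{H}^{2j}u=\mathcal{L}^{j}u$ and $\nabla_{H}^{2j+1}u=\nabla_{H}\mathcal{L}^{j}u$. The positivity of $\varphi$ is used only to make $\nabla_{H}\varphi/\varphi$ well defined and to turn the eigenequation into $\mathcal{L}\varphi/\varphi=-\lambda$.

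First I would record the first-order identity. Setting $\phi=\varphi$ in \eqref{eq33} and using $\mathcal{L}\varphi/\varphi=-\lambda$, a rearrangement gives, for every $v\in C_{0}^{\infty}(\Omega)$,
\begin{equation}\label{firststep}
|\nabla_{H}v|^{2}=\left|\nabla_{H}v-\frac{\nabla_{H}\varphi}{\varphi}v\right|^{2}+\nabla_{H}\cdot\left(\frac{\nabla_{H}\varphi}{\varphi}v^{2}\right)+\lambda v^{2}.
\end{equation}
Taking $v=\mathcal{L}^{m}u$ here already produces the transition from even to odd order,
\begin{equation}\label{oddstep}
\left|\nabla_{H}^{2m+1}u\right|^{2}-\lambda\left|\nabla_{H}^{2m}u\right|^{2}=\left|\nabla_{H}\mathcal{L}^{m}u-\frac{\nabla_{H}\varphi}{\varphi}\mathcal{L}^{m}u\right|^{2}+\nabla_{H}\cdot\left(\frac{\nabla_{H}\varphi}{\varphi}(\mathcal{L}^{m}u)^{2}\right),
\end{equation}
which is precisely \eqref{mainodd} in the base case $m=0$.

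The engine of the proof is a second-order identity. Expanding $|\mathcal{L}v+\lambda v|^{2}$, substituting $v\mathcal{L}v=\nabla_{H}\cdot(v\nabla_{H}v)-|\nabla_{H}v|^{2}$, and then eliminating $|\nabla_{H}v|^{2}$ by means of \eqref{firststep}, I obtain for every $v\in C_{0}^{\infty}(\Omega)$
\begin{equation}\label{secondstep}
|\mathcal{L}v|^{2}-\lambda^{2}v^{2}=|\mathcal{L}v+\lambda v|^{2}+2\lambda\left|\nabla_{H}v-\frac{\nabla_{H}\varphi}{\varphi}v\right|^{2}+2\lambda\,\nabla_{H}\cdot\left(\frac{\nabla_{H}\varphi}{\varphi}v^{2}-v\nabla_{H}v\right).
\end{equation}
This is exactly \eqref{maineven} for $m=1$ (with $v=u$), and it is the only genuinely nontrivial computation in the argument.

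The even identity \eqref{maineven} for general $m$ then follows by telescoping. With $a_{j}=\lambda^{2(m-j)}|\mathcal{L}^{j}u|^{2}$ one has $|\mathcal{L}^{m}u|^{2}-\lambda^{2m}|u|^{2}=\sum_{j=0}^{m-1}(a_{j+1}-a_{j})=\sum_{j=0}^{m-1}\lambda^{2(m-1-j)}\big(|\mathcal{L}^{j+1}u|^{2}-\lambda^{2}|\mathcal{L}^{j}u|^{2}\big)$, and applying \eqref{secondstep} with $v=\mathcal{L}^{j}u$ to each summand reproduces \eqref{maineven} after matching the powers of $\lambda$. For the odd identity \eqref{mainodd} with general $m$, I would feed the even identity into \eqref{oddstep}: since $|\nabla_{H}^{2m+1}u|^{2}-\lambda^{2m+1}|u|^{2}=\lambda\big(|\mathcal{L}^{m}u|^{2}-\lambda^{2m}|u|^{2}\big)$ plus the two right-hand terms of \eqref{oddstep}, multiplying \eqref{maineven} through by $\lambda$ raises every exponent by one, giving exactly the coefficients $\lambda^{2(m-j)-1}$ and $2\lambda^{2(m-j)}$ of \eqref{mainodd} together with the extra divergence term $\nabla_{H}\cdot(\tfrac{\nabla_{H}\varphi}{\varphi}(\mathcal{L}^{m}u)^{2})$. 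I expect the main obstacle to be purely the bookkeeping of the $\lambda$-exponents in the telescoping sum; all analytic content is concentrated in the pointwise identities \eqref{firststep} and \eqref{secondstep}, the former being given and the latter a short direct calculation.
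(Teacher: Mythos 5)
Your proposal is correct and follows essentially the same route as the paper: the key pointwise identity \eqref{secondstep} is exactly the paper's $m=1$ computation (the paper expands $\bigl|\mathcal{L}u-\tfrac{\mathcal{L}\varphi}{\varphi}u\bigr|^{2}$, which equals your $|\mathcal{L}u+\lambda u|^{2}$ since $\mathcal{L}\varphi/\varphi=-\lambda$), your telescoping sum is the paper's induction step $m\Rightarrow m+1$ written in closed form, and the passage from \eqref{maineven} to \eqref{mainodd} via the first-order identity applied to $\mathcal{L}^{m}u$ is identical to the paper's use of its equations \eqref{eq3}--\eqref{eq4}.
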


	Theorem \ref{mainG} has the following interesting consequence in the Euclidean setting.
		
		\begin{thm} \label{main2G} \cite{OS19}
			Let $\Omega \subset \mathbb{R}^{n}$ be a connected domain, for which the divergence theorem is true.
			Then we have the remainder of the higher order Poincar\'e inequality 
			
			\begin{multline}\label{mainevenLap}
			\int_{\Omega}\left|\nabla^{2m} u\right|^{2}dx-\lambda_{1}^{2 m} \int_{\Omega}|u|^{2}dx\\=\sum_{j=0}^{m-1}\lambda_{1}^{2(m-1-j)} \left(\int_{\Omega}\left|\Delta^{j+1} u+\lambda_{1} \Delta^{j} u\right|^{2}dx+2 \lambda_{1}\int_{\Omega}\left|\nabla \Delta^{j} u-\frac{\nabla u_{1}}{u_{1}} \Delta^{j} u\right|^{2}dx\right)\geq 0,
			\end{multline}
			where $m=1,2,\dots,$ and
			\begin{multline}\label{mainoddLap}
			\int_{\Omega}\left|\nabla^{2m+1} u\right|^{2}dx-\lambda_{1}^{2 m+1} 	\int_{\Omega}|u|^{2}dx=	\int_{\Omega}\left|\nabla \Delta^{m} u-\frac{\nabla u_{1}}{u_{1}} \Delta^{m} u\right|^{2}dx
			\\
			+\sum_{j=0}^{m-1} \lambda_{1}^{2(m-j)-1}\left(	\int_{\Omega}\left|\Delta^{j+1} u+\lambda_{1} \Delta^{j} u\right|^{2}dx+2 \lambda_{1}	\int_{\Omega}\left|\nabla \Delta^{j} u-\frac{\nabla u_{1}}{u_{1}} \Delta^{j} u\right|^{2}dx\right)\geq 0,
			\end{multline}
			where $m=0,1,\dots,$ for all $u\in C_{0}^{\infty}(\Omega).$ Here $u_{1}$ is the ground state of the Dirichlet Laplacian $-\Delta$ in $\Omega$ and $\lambda_{1}$ is the corresponding eigenvalue. The equality cases hold if and only if $u$ is proportional to $u_{1}$.
		\end{thm}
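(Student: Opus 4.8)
The plan is to recognize \eqref{mainevenLap} and \eqref{mainoddLap} as the Euclidean specialization of the pointwise identities in Theorem \ref{mainG}, and then simply integrate. Taking $\mathbb{G}=\mathbb{R}^n$, so that the horizontal gradient $\nabla_H$ becomes the full Euclidean gradient $\nabla$ and the sub-Laplacian $\mathcal{L}$ becomes the Laplacian $\Delta$, I would apply Theorem \ref{mainG} with $\varphi=u_1$ the ground state and $\lambda=\lambda_1$ the first Dirichlet eigenvalue. On a bounded connected domain the ground state is positive and, by elliptic regularity, smooth in the interior, so $u_1\in C^2(\Omega)$ with $u_1>0$ and $-\Delta u_1=\lambda_1 u_1$; hence $u_1$ is an admissible positive eigenfunction and the identities \eqref{maineven}, \eqref{mainodd} hold pointwise on $\Omega$ for every $u\in C_0^\infty(\Omega)$.

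First I would integrate these pointwise identities over $\Omega$. Every term on the right-hand side is either a nonnegative square or the divergence of the vector field $\frac{\nabla u_1}{u_1}(\Delta^j u)^2-\Delta^j u\,\nabla\Delta^j u$. Since $u\in C_0^\infty(\Omega)$, each $\Delta^j u$ and $\nabla\Delta^j u$ vanishes outside a compact subset of $\Omega$; on that compact set $u_1>0$, so the coefficient $\nabla u_1/u_1$ is smooth, and the whole vector field therefore has compact support in $\Omega$. Consequently its divergence integrates to zero, the potentially singular behaviour of $\nabla u_1/u_1$ near $\partial\Omega$ never being seen. What remains is precisely \eqref{mainevenLap} and \eqref{mainoddLap}; since $\lambda_1>0$ for the Dirichlet Laplacian on a bounded domain, all surviving terms are nonnegative, so both right-hand sides are $\geq 0$, which is the assertion.

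For the equality characterization I would argue as in the discussion following \eqref{ineqG}. Equality in \eqref{mainevenLap} forces every summand to vanish, in particular $\left|\nabla\Delta^j u-\frac{\nabla u_1}{u_1}\Delta^j u\right|^2=0$ for each $j$. Rewriting this as $\left|\nabla\left(\frac{\Delta^j u}{u_1}\right)\right|^2 u_1^2=0$ and using $u_1>0$ together with the connectedness of $\Omega$ shows $\Delta^j u/u_1$ is constant; taking $j=0$ yields $u=c\,u_1$ for a constant $c$. Conversely, if $u=c\,u_1$ then $\Delta^j u=(-\lambda_1)^j c\,u_1$, which makes both $\Delta^{j+1}u+\lambda_1\Delta^j u$ and $\nabla(\Delta^j u/u_1)$ vanish, so every square is zero and equality holds. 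The odd case \eqref{mainoddLap} is identical, the extra leading term $\left|\nabla\Delta^m u-\frac{\nabla u_1}{u_1}\Delta^m u\right|^2$ being treated exactly like the others.

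The main obstacle is not any single hard estimate but the careful justification that the divergence terms contribute nothing: the weight $\nabla u_1/u_1$ is singular at $\partial\Omega$, and it is only the compact support of the test function $u$ inside $\Omega$ that tames it. Once this point is settled, the result is an immediate pointwise-to-integral passage from Theorem \ref{mainG}.
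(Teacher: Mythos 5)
Your proposal is correct and follows essentially the same route as the paper: Theorem \ref{main2G} is presented there precisely as the Euclidean specialization of the pointwise identities \eqref{maineven} and \eqref{mainodd} of Theorem \ref{mainG} with $\varphi=u_{1}$ and $\lambda=\lambda_{1}$, integrated over $\Omega$ so that the divergence terms drop out, with the equality case handled exactly as in the discussion following \eqref{ineqG}. Your explicit attention to why the compact support of $u$ neutralizes the singularity of $\nabla u_{1}/u_{1}$ near $\partial\Omega$ is a worthwhile addition that the paper leaves implicit.
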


		\begin{proof}[Proof of Theorem \ref{mainG}]
			
			For $m=1$, a direct computation yields 
			
			\begin{multline}\label{m=2}
			\left|\mathcal{L} u-\frac{\mathcal{L} \varphi}{\varphi} u\right|^{2}=|\mathcal{L} u|^{2}-2 \frac{\mathcal{L} \varphi}{\varphi} u \mathcal{L} u+\left(\frac{\mathcal{L} \varphi}{\varphi}\right)^{2} |u|^{2}
			\\
			=|\mathcal{L} u|^{2}-\frac{\mathcal{L} \varphi}{\varphi}\left(\mathcal{L}\left(|u|^{2}\right)-2|\nabla_{H} u|^{2}\right)+\left(\frac{\mathcal{L} \varphi}{\varphi}\right)^{2} |u|^{2},
			\end{multline}
			$$
			-\frac{\mathcal{L} \varphi}{\varphi} \mathcal{L}\left(|u|^{2}\right)=2\lambda \nabla_{H}\cdot (u\nabla_{H}u)
			$$
			and
			$$
			2 \frac{\mathcal{L} \varphi}{\varphi}|\nabla_{H} u|^{2}=2 \frac{\mathcal{L} \varphi}{\varphi}\left(-\frac{\mathcal{L} \varphi}{\varphi} |u|^{2}+\left|\nabla_{H} u-\frac{\nabla_{H} \varphi}{\varphi} u\right|^{2}+\nabla_{H}\cdot\left(\frac{\nabla_{H} \varphi}{\varphi} |u|^{2}\right)\right).
			$$
		With $-\mathcal{L}\varphi= \lambda \varphi$ these follow that 
			$$
			\left|\mathcal{L} u-\frac{\mathcal{L} \varphi}{\varphi} u\right|^{2}	=|\mathcal{L} u|^{2}-\left(\frac{\mathcal{L} \varphi}{\varphi}\right)^{2} |u|^{2}
			$$
			
			$$
		+2 \frac{\mathcal{L} \varphi}{\varphi}\left(\left|\nabla_{H} u-\frac{\nabla_{H} \varphi}{\varphi} u\right|^{2}+\nabla_{H}\cdot\left(\frac{\nabla_{H} \varphi}{\varphi} |u|^{2}\right)\right)+2\lambda \nabla_{H}\cdot (u\nabla_{H}u)
			$$

			$$
			=|\mathcal{L} u|^{2}-\lambda^{2} |u|^{2}-2 \lambda\left(\left|\nabla_{H} u-\frac{\nabla_{H} \varphi}{\varphi} u\right|^{2}+\nabla_{H}\cdot\left(\frac{\nabla_{H} \varphi}{\varphi} |u|^{2}\right)\right)+2\lambda \nabla_{H}\cdot (u\nabla_{H}u),
			$$
			that is,
			$$|\mathcal{L} u|^{2}-\lambda^{2} |u|^{2}=\left|\mathcal{L} u+\lambda u\right|^{2}+2 \lambda\left(\left|\nabla_{H} u-\frac{\nabla_{H} \varphi}{\varphi} u\right|^{2}+\nabla_{H}\cdot\left(\frac{\nabla_{H} \varphi}{\varphi} |u|^{2}-u\nabla_{H}u\right)\right).$$
			By the induction
			$m \Rightarrow m+1,$ we establish
			
			$$
			\left|\mathcal{L}^{m +1} u\right|^{2}-\lambda^{2(m+1)} |u|^{2}
			$$
			
			$$
			=\left|\mathcal{L} \mathcal{L}^{m} u\right|^{2}-\lambda^{2}\left|\mathcal{L}^{m} u\right|^{2}+\lambda^{2}\left(\left|\mathcal{L}^{m} u\right|^{2}-\lambda^{2 m} |u|^{2}\right)
			$$
			
			$$
			=\left|\mathcal{L}^{m+1} u+\lambda \mathcal{L}^{m} u\right|^{2}+2 \lambda\left|\nabla_{H} \mathcal{L}^{m} u-\frac{\nabla_{H} \varphi}{\varphi} \mathcal{L}^{m} u\right|^{2}
			$$
			
			$$
			+2 \lambda \nabla_{H}\cdot\left(\frac{\nabla_{H} \varphi}{\varphi}\left(\mathcal{L}^{m} u\right)^{2}-\mathcal{L}^{m}u\nabla_{H}\mathcal{L}^{m}u\right)
			$$
			
			$$
			+\sum_{j=0}^{m-1} \lambda^{2(m-j)}\left(\left|\mathcal{L}^{j+1} u+\lambda \mathcal{L}^{j} u\right|^{2}+2 \lambda\left|\nabla_{H} \mathcal{L}^{j} u-\frac{\nabla_{H} \varphi}{\varphi} \mathcal{L}^{j} u\right|^{2}\right)
			$$
			
			$$
			+2 \sum_{j=0}^{m-1} \lambda^{2(m-j)} \left(\lambda \nabla_{H}\cdot\left(\frac{\nabla_{H} \varphi}{\varphi}\left(\mathcal{L}^{j} u\right)^{2}-\mathcal{L}^{j}u\nabla_{H}\mathcal{L}^{j}u\right)\right)
			$$
			
			$$
			=\sum_{j=0}^{m} \lambda^{2(m-j)}\left(\left|\mathcal{L}^{j+1} u+\lambda \mathcal{L}^{j} u\right|^{2}+2 \lambda\left|\nabla_{H} \mathcal{L}^{j} u-\frac{\nabla_{H} \varphi}{\varphi} \mathcal{L}^{j} u\right|^{2}\right)
			$$
			
			$$
			+2 \sum_{j=0}^{m} \lambda^{2(m-j)+1}  \nabla_{H}\cdot\left(\frac{\nabla_{H} \varphi}{\varphi}\left(\mathcal{L}^{j} u\right)^{2}-\mathcal{L}^{j}u\nabla_{H}\mathcal{L}^{j}u\right).
			$$
			It proves formula \eqref{maineven}.

			Now it remains to show relation \eqref{mainodd}. We have			\begin{multline}
			\label{eq1}
			\left|\nabla_{H} u-\frac{\nabla_{H} \varphi}{\varphi} u \right|^{2}=|\nabla_{H} u|^{2}-2\frac{\nabla_{H} \varphi}{\varphi} u \nabla_{H} u+\frac{|\nabla_{H} \varphi|^{2}}{\varphi^{2}}|u|^{2}\\=
			|\nabla_{H} u|^{2}-\frac{\nabla_{H} \varphi}{\varphi}  \nabla_{H} |u|^{2}+\frac{|\nabla_{H} \varphi|^{2}}{\varphi^{2}}|u|^{2},		\end{multline}
		and
			\begin{equation}
			\label{eq2} -\frac{\nabla_{H} \varphi}{\varphi}  \nabla_{H} |u|^{2}
			=-\nabla_{H}\cdot \left(\frac{\nabla_{H} \varphi}{\varphi} |u|^{2} \right) +\frac{\mathcal{L} \varphi}{\varphi}|u|^{2}-\frac{|\nabla_{H} \varphi|^{2}}{\varphi^{2}}|u|^{2}.	\end{equation}
			Equalities \eqref{eq1} and \eqref{eq2} imply 
			\begin{equation}
			\label{eq3}
			|\nabla_{H} u|^{2}-\lambda |u|^{2}=\left|\nabla_{H} u-\frac{\nabla_{H} \varphi}{\varphi} u\right|^{2}+\nabla_{H}\cdot\left(\frac{\nabla_{H} \varphi}{\varphi} |u|^{2}\right).
			\end{equation}
			It gives
			\eqref{mainodd} when $m=0.$
			
			Now by using the scaling 
			$$
			u \mapsto \mathcal{L}^{m} u
			$$
			to \eqref{eq3},  we obtain 
			\begin{equation}
			\label{eq4}
			\left|\nabla_{H} \mathcal{L}^{m} u\right|^{2}=\left|\nabla_{H} \mathcal{L}^{m} u-\frac{\nabla_{H} \varphi}{\varphi} \mathcal{L}^{m} u\right|^{2}+\nabla_{H}\cdot\left(\frac{\nabla_{H} \varphi}{\varphi}\left(\mathcal{L}^{m} u\right)^{2}\right)+\lambda\left(\mathcal{L}^{m} u\right)^{2}.
			\end{equation}
			
			Finally, by using formula \eqref{maineven}, we establish
			$$
			\left|\nabla_{H} \mathcal{L}^{m} u\right|^{2}-\lambda^{2 m+1} |u|^{2}
			$$
			
			$$
			=\left|\nabla_{H} \mathcal{L}^{m} u-\frac{\nabla_{H} \varphi}{\varphi} \mathcal{L}^{m} u\right|^{2}+\nabla_{H}\cdot\left(\frac{\nabla_{H} \varphi}{\varphi}\left(\mathcal{L}^{m} u\right)^{2}\right)
			$$
			
			$$
			+\lambda\left(\left(\mathcal{L}^{m} u\right)^{2}-\lambda^{2 m} |u|^{2}\right)
			$$
			
			$$
			=\left|\nabla_{H} \mathcal{L}^{m} u-\frac{\nabla_{H} \varphi}{\varphi} \mathcal{L}^{m} u\right|^{2}+\nabla_{H}\cdot\left(\frac{\nabla_{H} \varphi}{\varphi}\left(\mathcal{L}^{m} u\right)^{2}\right)
			$$
			
			$$
			+\sum_{j=0}^{m-1} \lambda^{2(m-j)-1}\left(\left|\mathcal{L}^{j+1} u+\lambda \mathcal{L}^{j} u\right|^{2}+2 \lambda\left|\nabla_{H} \mathcal{L}^{j} u-\frac{\nabla_{H} \varphi}{\varphi} \mathcal{L}^{j} u\right|^{2}\right)
			$$
			
			$$
			+2 \sum_{j=0}^{m-1} \lambda^{2(m-j)}  \nabla_{H}\cdot\left(\frac{\nabla_{H} \varphi}{\varphi}\left(\mathcal{L}^{j} u\right)^{2}\right).
			$$

		\end{proof}

	{Picone type representation formula}: 
		\begin{thm}\cite{OS19}\label{mainL2m}  
			For all $u\in C^{1}(\Omega)$ and $\varphi\in C^{2}(\Omega)$ with $\varphi>0$, we have 
			\begin{multline}\label{poincareLpm}
			|\nabla_{H} u|^{p_{m}}+\left(\frac{\mathcal{L} \varphi}{\varphi}+\sigma_{m}\right) u^{p_{m}}
			=\sum_{j=1}^{m-1}\left|| \nabla_{H}\left(u^{p_{m-j-1}}\right)|^{p_{j}}-2^{p_{j}-1} u^{p_{m-1}}\right|^{2}
			\\
			+\left|\nabla_{H}\left(u^{p_{m-1}}\right)-\frac{\nabla_{H} \varphi}{\varphi} u^{p_{m-1}}\right|^{2}+\nabla_{H}\cdot\left(\frac{\nabla_{H} \varphi}{\varphi} u^{p_{m}}\right) \text{in}\; \Omega\subset \mathbb{G},
			\end{multline}
			where where $m$ is a nonnegative integer.
			Here $p_{m}=2^{m},\, m \geqslant 0,$
			and	
			$\sigma_{m}=\frac{1}{4} \sum_{j=1}^{m-1} 4^{p_{j}},\, m \geqslant 1.$
		\end{thm}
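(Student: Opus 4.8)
\emph{Proof plan.} The plan is to derive \eqref{poincareLpm} by splitting it into a part carrying all the dependence on $\varphi$ and a purely algebraic part, exactly as the scaling substitution $u\mapsto\mathcal{L}^{m}u$ was used in the proof of Theorem \ref{mainG}. The key observation is that $u^{p_{m}}=(u^{p_{m-1}})^{2}$, so that applying the fundamental Picone identity \eqref{eq33} with $u$ replaced by the power $u^{p_{m-1}}$ gives at once
\begin{equation}\nonumber
\left|\nabla_{H}(u^{p_{m-1}})\right|^{2}+\frac{\mathcal{L}\varphi}{\varphi}u^{p_{m}}=\left|\nabla_{H}(u^{p_{m-1}})-\frac{\nabla_{H}\varphi}{\varphi}u^{p_{m-1}}\right|^{2}+\nabla_{H}\cdot\left(\frac{\nabla_{H}\varphi}{\varphi}u^{p_{m}}\right).
\end{equation}
This already reproduces the last two terms on the right of \eqref{poincareLpm} together with the summand $\frac{\mathcal{L}\varphi}{\varphi}u^{p_{m}}$ on the left, so after substitution and cancellation of $\frac{\mathcal{L}\varphi}{\varphi}u^{p_{m}}$ the function $\varphi$ disappears entirely and the theorem reduces to the $\varphi$-free algebraic identity
\begin{equation}\nonumber
|\nabla_{H}u|^{p_{m}}+\sigma_{m}u^{p_{m}}=\left|\nabla_{H}(u^{p_{m-1}})\right|^{2}+\sum_{j=1}^{m-1}\left|\,|\nabla_{H}(u^{p_{m-j-1}})|^{p_{j}}-2^{p_{j}-1}u^{p_{m-1}}\right|^{2}.
\end{equation}

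The second stage is to prove this algebraic identity, and for this I would use the chain rule in the sharpened form $|\nabla_{H}(u^{p_{\ell}})|^{2}=4\,u^{p_{\ell}}|\nabla_{H}(u^{p_{\ell-1}})|^{2}$, which follows from $\nabla_{H}(u^{p_{\ell}})=2u^{p_{\ell-1}}\nabla_{H}(u^{p_{\ell-1}})$. The strategy is a telescoping ``peel one square at a time'': starting from the leading term $|\nabla_{H}(u^{p_{m-1}})|^{2}$, I expand the $j=1$ square, whose cross term equals $4u^{p_{m-1}}|\nabla_{H}(u^{p_{m-2}})|^{2}=|\nabla_{H}(u^{p_{m-1}})|^{2}$ and hence cancels it, leaving a new leading term $|\nabla_{H}(u^{p_{m-2}})|^{4}$ and a pure constant multiple of $u^{p_{m}}$; iterating, the $j$-th square cancels the leading term produced at step $j-1$ and passes on $|\nabla_{H}(u^{p_{m-j-1}})|^{2^{j+1}}$. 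After the final step $j=m-1$ the leading term becomes $(|\nabla_{H}u|^{2})^{2^{m-1}}=|\nabla_{H}u|^{p_{m}}$, as required. The base case $m=1$ is trivial, since then $\sigma_{1}=0$, the sum is empty, and the identity collapses to $|\nabla_{H}u|^{2}=|\nabla_{H}u|^{2}$.

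The main obstacle is the exact bookkeeping of the constants and exponents in this telescoping. Two matchings must be checked at each step: that the coefficient of the cross term is precisely $2^{p_{j}}=2^{2^{j}}$, so that the chain-rule factor $4^{2^{j-1}}$ produced by $|\nabla_{H}(u^{p_{m-j}})|^{2^{j-1}}$ is reproduced exactly, and that the power of $u$ occurring there is exactly $p_{m-1}=2^{m-1}$, which forces the relation $p_{m-j}\,2^{j-1}=2^{m-1}$. Granting these, each square deposits a constant term $(2^{p_{j}-1})^{2}u^{p_{m}}=\frac{1}{4}\,4^{p_{j}}u^{p_{m}}$, and summing over $j=1,\dots,m-1$ yields exactly $\sigma_{m}u^{p_{m}}=\frac{1}{4}\sum_{j=1}^{m-1}4^{p_{j}}\,u^{p_{m}}$. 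Combining the two stages then gives \eqref{poincareLpm}; the hypotheses $u\in C^{1}(\Omega)$ and $0<\varphi\in C^{2}(\Omega)$ are precisely what is needed for $\mathcal{L}\varphi$, the divergence term, and all the powers to be meaningful pointwise.
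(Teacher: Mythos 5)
Your proof is correct, but it is organized differently from the paper's. The paper proceeds by induction on $m$: it verifies $m=1$ and $m=2$ by hand, and then passes from $m$ to $m+1$ by substituting $|u|^{2}$ for $u$ in the induction hypothesis, using the expansion $\bigl||\nabla_{H}u|^{p_{m}}-2^{p_{m}-1}u^{p_{m}}\bigr|^{2}=|\nabla_{H}u|^{p_{m+1}}-2^{p_{m}}u^{p_{m}}|\nabla_{H}u|^{p_{m}}+\frac{1}{4}4^{p_{m}}u^{p_{m+1}}$ to absorb the cross term; the $\varphi$-dependent terms are carried along at every step of the induction. You instead dispose of all the $\varphi$-dependence in one stroke by applying the base identity \eqref{eq33} to $u^{p_{m-1}}$, reducing the theorem to a $\varphi$-free algebraic identity, which you then prove by a direct telescoping sum: expanding the $j$-th square, the cross term $2^{p_{j}}u^{p_{m-1}}|\nabla_{H}(u^{p_{m-j-1}})|^{p_{j}}$ equals $|\nabla_{H}(u^{p_{m-j}})|^{p_{j}}$ by the chain rule (since $p_{j}p_{m-j-1}=p_{m-1}$), so consecutive terms cancel and only $|\nabla_{H}u|^{p_{m}}-|\nabla_{H}(u^{p_{m-1}})|^{2}$ plus the constants $\frac{1}{4}\sum_{j}4^{p_{j}}u^{p_{m}}=\sigma_{m}u^{p_{m}}$ survive; I have checked this bookkeeping and it closes. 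The underlying computations (chain rule for squares, expansion of the quadratic terms) are the same in both arguments, but your version buys a cleaner separation of the role of $\varphi$ (used exactly once) from the purely combinatorial part, and avoids induction altogether, whereas the paper's inductive substitution $u\mapsto|u|^{2}$ performs the same telescoping one rung at a time. One small point worth making explicit in a write-up: the identity involves odd powers such as $u^{p_{m-1}}$ and the chain rule $\nabla_{H}(u^{p_{\ell}})=2u^{p_{\ell-1}}\nabla_{H}(u^{p_{\ell-1}})$, so $u$ is implicitly real-valued here (as in the paper's own computation, which writes $u^{p_{m}}$ rather than $|u|^{p_{m}}$).
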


			Theorem \ref{mainL2m} has the following interesting consequence in the Euclidean setting.
		\begin{thm}\cite{OS19}\label{mainLp2}
			Let $\Omega \subset \mathbb{R}^{n}$ be a connected domain, for which the divergence theorem is true. 
			For all $u\in C_{0}^{1}(\Omega)$, we have  
			
			\begin{multline}\label{mainLpLap}
			\int_{\Omega}|\nabla u|^{p_{m}}dx-\left(\lambda_{1}-\sigma_{m}\right) \int_{\Omega}|u|^{p_{m}}dx\\=\sum_{j=1}^{m-1} \int_{\Omega}\left|| \nabla\left(u^{p_{m-j-1}}\right)|^{p_{j}}-2^{p_{j}-1} u^{p_{m-1}}\right|^{2}dx+\int_{\Omega}\left|\nabla\left(u^{p_{m-1}}\right)-\frac{\nabla u_{1}}{u_{1}} u^{p_{m-1}}\right|^{2}dx\geq 0,
			\end{multline}
			 where $\sigma_{m}=\frac{1}{4} \sum_{j=1}^{m-1} 4^{p_{j}},\, m \in \mathbb{N},\, p_{j}=2^{j},$ $u_{1}$ is the ground state of the minus Dirichlet Laplacian $-\Delta$ in $\Omega$ and $\lambda_{1}$ is the corresponding eigenvalue. 
		\end{thm}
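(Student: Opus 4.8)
The plan is to obtain this Euclidean statement as a direct corollary of the pointwise Picone type representation formula \eqref{poincareLpm} of Theorem \ref{mainL2m}, in exactly the way Theorem \ref{main2G} was deduced from the pointwise identities of Theorem \ref{mainG}. In the abelian case $\mathbb{G}=\mathbb{R}^{n}$ the horizontal gradient $\nabla_{H}$ reduces to the usual gradient $\nabla$ and the sub-Laplacian $\mathcal{L}$ to the Laplacian $\Delta$, so \eqref{poincareLpm} holds verbatim with $\nabla_{H}\mapsto\nabla$ and $\mathcal{L}\mapsto\Delta$. The whole argument then consists of a good choice of the positive function $\varphi$, an integration of the identity over $\Omega$, and the vanishing of the divergence term.

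The natural choice is $\varphi=u_{1}$, the ground state of the Dirichlet Laplacian $-\Delta$ on $\Omega$, so that $-\Delta u_{1}=\lambda_{1}u_{1}$ and hence $\frac{\mathcal{L}\varphi}{\varphi}=\frac{\Delta u_{1}}{u_{1}}=-\lambda_{1}$. For a bounded (or suitably regular) domain the existence of such a ground state with $u_{1}>0$ in the interior is classical, via the variational characterisation of $\lambda_{1}$ together with the strong maximum principle, and elliptic regularity gives $u_{1}\in C^{2}(\Omega)$; thus $\frac{\nabla u_{1}}{u_{1}}$ is a well-defined smooth field on the interior, which is all that \eqref{poincareLpm} requires on the support of $u$. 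With this substitution the potential coefficient $\left(\frac{\mathcal{L}\varphi}{\varphi}+\sigma_{m}\right)$ becomes $-(\lambda_{1}-\sigma_{m})$.

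Integrating the pointwise identity over $\Omega$ and using that $p_{m}=2^{m}$ is even, so that $u^{p_{m}}=|u|^{p_{m}}$, the left-hand side becomes exactly $\int_{\Omega}|\nabla u|^{p_{m}}\,dx-(\lambda_{1}-\sigma_{m})\int_{\Omega}|u|^{p_{m}}\,dx$, while the remaining terms on the right are integrals of squares and are therefore manifestly nonnegative. This yields both the claimed identity \eqref{mainLpLap} and the inequality $\ge 0$ at once.

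The one substantive step, and the only place the hypothesis on $\Omega$ enters, is showing that the divergence term $\nabla\cdot\left(\frac{\nabla u_{1}}{u_{1}}u^{p_{m}}\right)$ integrates to zero; this is where I expect the only real care to be needed. Since $u\in C_{0}^{1}(\Omega)$, the vector field $\frac{\nabla u_{1}}{u_{1}}u^{p_{m}}$ is supported in a compact subset of the interior, where $u_{1}$ is bounded away from zero, so neither a boundary flux nor a singularity from the vanishing of $u_{1}$ near $\partial\Omega$ can contribute; the divergence theorem, valid on $\Omega$ by assumption, then makes the integral vanish. If one wishes the equality characterisation as in Theorem \ref{main2G}, it follows by forcing each squared integrand to vanish pointwise: in particular $\nabla\left(u^{p_{m-1}}\right)=\frac{\nabla u_{1}}{u_{1}}u^{p_{m-1}}$, i.e. $\nabla\!\left(u^{p_{m-1}}/u_{1}\right)=0$, so by connectedness of $\Omega$ the quantity $u^{p_{m-1}}$ is proportional to $u_{1}$, which for $m=1$ is precisely the statement that $u$ is proportional to $u_{1}$.
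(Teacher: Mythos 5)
Your proposal is correct and follows exactly the route the paper intends: Theorem \ref{mainLp2} is presented there as a direct consequence of the pointwise Picone type identity \eqref{poincareLpm} of Theorem \ref{mainL2m}, obtained by specialising to $\mathbb{G}=\mathbb{R}^{n}$, choosing $\varphi=u_{1}$ so that $\frac{\mathcal{L}\varphi}{\varphi}=-\lambda_{1}$, integrating over $\Omega$, and killing the divergence term via the divergence theorem and the compact support of $u$. Your additional remarks on the well-definedness of $\nabla u_{1}/u_{1}$ on the support of $u$ and on the equality case are consistent with the paper's treatment of the analogous Theorem \ref{main2G}.
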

		
		Note that, for $m=1$ the sigma notation term in \eqref{mainLpLap} disappears as usual (since the lower index is greater than the upper one). 
		It is important to observe that \eqref{mainLpLap}  can be considered as a remainder term for some $L^{p}$-Poincar\'e inequalities (which are also commonly called as $L^{p}$-Friedrichs inequalities). In  general, determining the sharp constant in the $L^{p}$-Friedrichs inequality is an open problem.

		\begin{proof}[Proof of Theorem \ref{mainL2m}]
			When $m=1$, we have $p_{1}=2$, $\sigma_{1}=0$,
			and
			
			$$
			|\nabla_{H} u|^{2}+\frac{\mathcal{L} \varphi}{\varphi} u^{2}=\left|\nabla_{H} u-\frac{\nabla_{H} \varphi}{\varphi} u\right|^{2}+\nabla_{H}\cdot\left(\frac{\nabla_{H} \varphi}{\varphi} u\right).
			$$
			When $m=2$, we have $p_{2}=4$, $\sigma_{2}=\frac{1}{4}\, 4^{p_{1}}=4$, and
			$$
			|\nabla_{H} u|^{4}+\left(\frac{\mathcal{L} \varphi}{\varphi}+4\right) u^{4}=\left.|| \nabla_{H} u\right|^{2}-\left.2 |u|^{2}\right|^{2}+
			$$
			$$
			+\left|\nabla_{H}\left(|u|^{2}\right)-\frac{\nabla_{H} \varphi}{\varphi} |u|^{2}\right|^{2}+\nabla_{H}\cdot\left(\frac{\nabla_{H} \varphi}{\varphi} u^{4}\right).
			$$
			In order to use the induction process, we observe  
			
			$$\left|| \nabla_{H} u |^{p_{m}}-2^{p_{m}-1} u^{p_{m}}\right|^{2}
			=|\nabla_{H} u|^{p_{m+1}}-2^{p_{m}} u^{p_{m}}|\nabla_{H} u|^{p_{m}}+\frac{1}{4}\, 4^{p_{m}} u^{p_{m+1}}.
			$$
			Plugging in \(|u|^{2}\) instead of \(u\)  in \eqref{poincareLpm} we get 
			
			$$
			\left|\nabla_{H}\left(|u|^{2}\right)\right|^{p_{m}}+\left(\frac{\mathcal{L} \varphi}{\varphi}+\sigma_{m}\right)\left(|u|^{2}\right)^{p_{m}}
			$$
			
			$$
			=\sum_{j=1}^{m-1} \left|\left|\nabla_{H}\left(|u|^{2}\right)^{p_{m-j-1}}\right|^{p_{j}}-2^{p_{j}-1}\left(|u|^{2}\right)^{p_{m-1}}\right|^{2}
			$$
			
			$$
			+\left|\nabla_{H}\left(|u|^{2}\right)^{p_{m-1}} -\frac{\nabla_{H} \varphi}{\varphi}\left(|u|^{2}\right)^{p_{m-1}}\right|^{2}+\nabla_{H}\cdot\left(\frac{\nabla_{H} \varphi}{\varphi}\left(|u|^{2}\right)^{p_{m}}\right),
			$$
			where 
			
			$$
			\left|\nabla_{H}\left(|u|^{2}\right)\right|^{p_{m}}+\left(\frac{\mathcal{L} \varphi}{\varphi}+\sigma_{m}\right)\left(|u|^{2}\right)^{p_{m}}=2^{p_{m}} u^{p_{m}}\left|\nabla_{H} u \right|^{p_{m}}+\left(\frac{\mathcal{L} \varphi}{\varphi}+\sigma_{m}\right) u^{p_{m+1}}
			$$
			and 
			$$
			\sum_{j=1}^{m-1} \left|\left|\nabla_{H}\left(|u|^{2}\right)^{p_{m-j-1}}\right|^{p_{j}}-2^{p_{j}-1}\left(|u|^{2}\right)^{p_{m-1}}\right|^{2}
			$$
			
			$$
			+\left|\nabla_{H}\left(|u|^{2}\right)^{p_{m-1}} -\frac{\nabla_{H} \varphi}{\varphi}\left(|u|^{2}\right)^{p_{m-1}}\right|^{2}+\nabla_{H}\cdot\left(\frac{\nabla_{H} \varphi}{\varphi}\left(|u|^{2}\right)^{p_{m}}\right)
			$$
			$$
			=\sum_{j=1}^{m-1} \left| |\nabla_{H}\left(u^{p_{m-j}}\right)|^{p_{j}}-2^{p_{j}-1} u^{p_{m}}\right|^{2}
			$$
			
			$$
			+\left|\nabla_{H}\left(u^{p_{m}}\right)-\frac{\nabla_{H} \varphi}{\varphi} u^{p_{m}}\right|^{2}+\nabla_{H}\cdot\left(\frac{\nabla_{H} \varphi}{\varphi} u^{p_{m+1}}\right).
			$$
		It yields 
			$$|\nabla_{H} u|^{p_{m+1}}
			=2^{p_{m}} u^{p_{m}}\left|\nabla_{H} u\right|^{p_{m}}+\left|| \nabla_{H} u|^{p_{m}}-2^{p_{m}-1} u^{p_{m}}\right|^{2}-\frac{1}{4}\, 4^{p_{m}} u^{p_{m+1}}
			$$
			$$
			=-\left(\frac{\mathcal{L} \varphi}{\varphi}+\sigma_{m}+\frac{1}{4}\, 4^{p_{m}}\right) u^{p_{m+1}}+\left| |\nabla_{H} u|^{p_{m}}-2^{p_{m}-1} u^{p_{m}}\right|^{2}
			$$
			$$
			+\sum_{j=1}^{m-1} \left| |\nabla_{H}\left(u^{p_{m-j}}\right)|^{p_{j}}-2^{p_{j}-1} u^{p_{m}}\right|^{2}
			$$
			
			$$
			+\left|\nabla_{H}\left(u^{p_{m}}\right)-\frac{\nabla_{H} \varphi}{\varphi} u^{p_{m}}\right|^{2}+\nabla_{H}\cdot\left(\frac{\nabla_{H} \varphi}{\varphi} u^{p_{m+1}}\right)
			$$
			$$
				=-\left(\frac{\mathcal{L} \varphi}{\varphi}+\sigma_{m+1}\right) u^{p_{m+1}}+\left| |\nabla_{H} u|^{p_{m}}-2^{p_{m}-1} u^{p_{m}}\right|^{2}
			$$
			$$
			+\sum_{j=1}^{m-1} \left| |\nabla_{H}\left(u^{p_{m-j}}\right)|^{p_{j}}-2^{p_{j}-1} u^{p_{m}}\right|^{2}
			$$
			
			$$
			+\left|\nabla_{H}\left(u^{p_{m}}\right)-\frac{\nabla_{H} \varphi}{\varphi} u^{p_{m}}\right|^{2}+\nabla_{H}\cdot\left(\frac{\nabla_{H} \varphi}{\varphi} u^{p_{m+1}}\right).
			$$
		\end{proof}
	
	Finally, note that in addition to the above discussed two approaches (the fundamental solution and horizontal estimate approach) there is another interesting approach for Hardy-type inequalities on Heisenberg group so called the \emph{CC-distance approach}.  For discussions in this direction we suggest  \cite{BCG06}, \cite{BCX07}, \cite{CDPT}, \cite{FranPra19}, \cite{Lehrb} and \cite{Yang13} as well as references therein. 
	
\section{Hardy type inequalities on homogeneous groups}
	\label{Sec3}
	By the definition, there is no homogeneous (horizontal) gradient on {non-stratified} graded groups, so there is no horizontal estimates.
A non-stratified graded group $\mathbb{G}$ may not have a homogeneous sub-Laplacian, but it always has so-called  Rockland operators, which are left-invariant homogeneous subelliptic differential operators on $\mathbb{G}$. Therefore, the fundamental solution approach can be applied to general graded groups.

{\bf Beyond graded groups}: There is no invariant homogeneous subelliptic differential operator on non-graded homogeneous groups; in particular, {no fundamental solution}.
	
		Question is even: 
		\begin{itemize}
			\item How to formulate Hardy's inequality on non-graded homogeneous groups?
		\end{itemize} 
		
		A systematic analysis towards an answer to this question was presented recently in the book form \cite{RS_book}. One of the key ideas was consistently working with the quasi-radial derivative operator $\mathcal{R}_{|x|}:=\frac{d}{d|x|}$  	and with the  Euler operator $\mathbb{E}:=|x|\frac{d}{d|x|}$
		to obtain homogeneous group analogues of the Hardy type inequalities.

		Actually, it can be shown that any (connected, simply connected) nilpotent Lie group is some $\mathbb{R}^{n}$ with a { polynomial group law}: $\mathbb{R}^{n}$ with linear group law, $\mathbb H^{n}$ with quadratic group law, etc. So we can identify $\mathbb{G}$ with $\mathbb{R}^{n}$ (topologically).
		
		\begin{defn}If a Lie group (on $\mathbb{R}^{n}$) $\mathbb{G}$ has a property that there exist $n$-real numbers
		$\nu_1,...,\nu_n$ such that the dilation
		$$D_{\lambda}(x):=(\lambda^{\nu_1}x_1,...,\lambda^{\nu_n}x_n), \quad D_{\lambda}:\mathbb{R}^{n}\rightarrow\mathbb{R}^{n},$$
		is an automorphism of the group $\mathbb{G}$ for each $\lambda>0$, then it is called a
		homogeneous group.
			\end{defn}
				
		Let us fix a basis $\{X_{1},\ldots,X_{n}\}$ of the Lie algebra $\mathfrak{g}$
		of the homogeneous group $\mathbb{G}$ such that $X_{k}$ is homogeneous of degree $\nu_{k}$.
		
		Then the homogeneous dimension of $\mathbb{G}$ is
		$$
		Q=\nu_{1}+\cdots+\nu_{n}.
		$$

		A class of homogeneous groups is one of most general subclasses of
		nilpotent Lie groups, that is, the class of homogeneous groups gives
		almost the class of all nilpotent Lie groups but is not equal to it. In 1970, Dyer
		gave an example of a (nine-dimensional) nilpotent Lie group that does
		not allow for any family of dilations \cite{Dyer}.

		Special cases of the homogeneous groups:
		\begin{itemize}
			\item the Euclidean group ($\mathbb{R}^{n}; +$),
			\item H-type groups,
			\item stratified groups,
			\item graded (Lie) groups.
		\end{itemize}
	
	 We also recall the standard Lebesgue measure $dx$ on $\mathbb{R}^{n}$ is the Haar measure for $\mathbb{G}$. It makes 
	the class of homogeneous groups convenient for  analysis. One also can assume that the origin $0$ of $\mathbb{R}^{n}$ is the identity of $\mathbb{G}$, If it is not, then by using a smooth diffeomorphism one can obtain a new (isomorphic) homogeneous group which has the identity $0$. For further discussions in this direction we refer to a recent open access book \cite{RS_book}.

		Let $\mathbb{G}$ be a homogeneous group
		of homogeneous dimension $Q$. Then for all $f\in C_{0}^{\infty}(\mathbb{G}\backslash\{0\})$ and  for any homogeneous quasi-norm $|\cdot|$, we have the following
	
		Hardy inequality on homogeneous groups \cite{RS17Kac}:
		$$
		\left\|\frac{f}{|x|}\right\|_{L^{p}(\mathbb{G})}\leq\frac{p}{Q-p}\left\|\mathcal{R}_{|x|} f\right\|_{L^{p}(\mathbb{G})}, \quad 1<p<Q,
		$$
		for all $f\in C^{\infty}_{0}(\mathbb{G}\backslash\{0\})$, where $\mathcal{R}_{|x|}:=\frac{d}{d|x|}$. Moreover, the constant above is sharp and is attained if and only if $f=0$.

	The main idea to prove these type of inequalities is consistently to work with radial derivative $\mathcal{R}_{|x|}:=\frac{d}{d|x|}$
	and with the  Euler operator $\mathbb{E}:=|x|\frac{d}{d|x|},$ so to obtain relations on homogeneous groups in terms of $\mathcal{R}_{|x|}$ or/and $\mathbb{E}$.
		These yield many inequalities: Hardy, Rellich, Caffarelli-Kohn-Nirenberg, Sobolev type, ..., with best constants for { any homogeneous quasi-norm}.

		\begin{thm}\cite{RSCommunContempMath}\label{aHardy}
			Let $\mathbb{G}$ be a homogeneous group
			of homogeneous dimension $Q\geq 3$.
			Then for every complex-valued function $f\in C^{\infty}_{0}(\mathbb{G}\backslash\{0\})$
			and any homogeneous quasi-norm $|\cdot|$ on $\mathbb{G}$ we have
			\begin{multline}\label{awH}
			\left\|\frac{1}{|x|^{\alpha}}\mathcal{R} f\right\|^{2}_{L^{2}(\mathbb{G})}-\left(\frac{Q-2}{2}-\alpha\right)^{2}
			\left\|\frac{f}{|x|^{\alpha+1}}\right\|^{2}_{L^{2}(\mathbb{G})}
			\\=\left\|\frac{1}{|x|^{\alpha}}\mathcal{R} f+\frac{Q-2-2\alpha}{2|x|^{\alpha+1}}f
			\right\|^{2}_{L^{2}(\mathbb{G})}
			\end{multline}
			for all $\alpha\in\mathbb{R}.$
		\end{thm}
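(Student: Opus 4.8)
The plan is to verify \eqref{awH} as a pointwise-then-integrated identity: expand the squared norm on the right, and reduce the resulting cross term back to the left-hand side by an integration by parts in the radial variable. Throughout I write $\beta:=\frac{Q-2}{2}-\alpha$, so the right-hand side of \eqref{awH} reads $\big\|\,|x|^{-\alpha}\mathcal{R} f+\beta|x|^{-\alpha-1}f\,\big\|^{2}_{L^{2}(\mathbb{G})}$. First I would expand the integrand by $|A+B|^{2}=|A|^{2}+2\operatorname{Re}(A\overline{B})+|B|^{2}$ with $A=|x|^{-\alpha}\mathcal{R} f$ and $B=\beta|x|^{-\alpha-1}f$. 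The $|A|^{2}$ and $|B|^{2}$ terms integrate to $\big\|\,|x|^{-\alpha}\mathcal{R} f\,\big\|^{2}_{L^{2}(\mathbb{G})}$ and $\beta^{2}\big\|\,|x|^{-\alpha-1}f\,\big\|^{2}_{L^{2}(\mathbb{G})}$, and the cross term, after using that $\mathcal{R}$ is a real derivative so that $2\operatorname{Re}\big((\mathcal{R} f)\overline{f}\big)=\mathcal{R}\big(|f|^{2}\big)$, becomes $\beta\int_{\mathbb{G}}|x|^{-2\alpha-1}\mathcal{R}\big(|f|^{2}\big)\,dx$.

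The key step is an integration-by-parts identity for the radial derivative. Using the Folland--Stein polar decomposition $dx=r^{Q-1}\,dr\,d\sigma(y)$ on $\mathbb{G}$, with $r=|x|$ and $\sigma$ the induced measure on the unit quasi-sphere $\{|x|=1\}$, and the fact that $\mathcal{R}=\frac{d}{d|x|}$ acts as $\partial_{r}$ along dilation rays, I would integrate by parts in $r$ to get
\begin{equation}\label{ibp}
\int_{\mathbb{G}}(\mathcal{R} u)\,|x|^{\gamma}\,dx=-(\gamma+Q-1)\int_{\mathbb{G}}u\,|x|^{\gamma-1}\,dx,\qquad u\in C_{0}^{\infty}(\mathbb{G}\backslash\{0\}),\ \gamma\in\mathbb{R}.
\end{equation}
Since $f$, and hence $u=|f|^{2}$, is supported away from both the origin and infinity, the boundary terms at $r=0$ and $r=\infty$ vanish, so no cut-off or limiting argument is needed.

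Finally I would apply \eqref{ibp} with $u=|f|^{2}$ and $\gamma=-(2\alpha+1)$. Then $\gamma+Q-1=Q-2-2\alpha=2\beta$ and $|x|^{\gamma-1}=|x|^{-2\alpha-2}$, so the cross term equals $\beta\cdot\big(-(Q-2-2\alpha)\big)\big\|\,|x|^{-\alpha-1}f\,\big\|^{2}_{L^{2}(\mathbb{G})}=-2\beta^{2}\big\|\,|x|^{-\alpha-1}f\,\big\|^{2}_{L^{2}(\mathbb{G})}$. Adding the three contributions, the right-hand side of \eqref{awH} collapses to $\big\|\,|x|^{-\alpha}\mathcal{R} f\,\big\|^{2}_{L^{2}(\mathbb{G})}+(-2\beta^{2}+\beta^{2})\big\|\,|x|^{-\alpha-1}f\,\big\|^{2}_{L^{2}(\mathbb{G})}$, which is precisely the left-hand side once we recall $\beta^{2}=\big(\frac{Q-2}{2}-\alpha\big)^{2}$. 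The only genuine analytic ingredient is the polar-coordinate formula \eqref{ibp}, which is standard on homogeneous groups; the main thing to be careful about is the coefficient bookkeeping, namely matching $\gamma+Q-1$ against the weight $|x|^{-2\alpha-2}$ and checking the cancellation $-2\beta^{2}+\beta^{2}=-\beta^{2}$.
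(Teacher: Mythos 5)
Your proof is correct: the expansion of the square, the reduction of the cross term via $2\operatorname{Re}\big((\mathcal{R} f)\overline{f}\big)=\mathcal{R}\big(|f|^{2}\big)$, and the radial integration by parts in the Folland--Stein polar decomposition $dx=r^{Q-1}\,dr\,d\sigma(y)$ all check out, including the coefficient bookkeeping $-2\beta^{2}+\beta^{2}=-\beta^{2}$. The survey does not reproduce the proof of this theorem (it only cites \cite{RSCommunContempMath}), but your argument is exactly the ``work consistently with the radial derivative'' method the paper describes and that the original reference uses, so this is essentially the same approach.
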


		As a consequence of \eqref{awH}, we obtain the weighted Hardy
		inequality on the homogeneous group $\mathbb{G},$ with the sharp constant:

		For all complex-valued functions $f\in C^{\infty}_{0}(\mathbb{G}\backslash\{0\})$ we have
		\begin{equation}\label{awHardyeq-g}
		\frac{|Q-2-2\alpha|}{2}\left\|\frac{f}{|x|^{\alpha+1}}\right\|_{L^{2}(\mathbb{G})}\leq
		\left\|\frac{1}{|x|^{\alpha}}\mathcal{R} f\right\|_{L^{2}(\mathbb{G})},\quad \forall\alpha\in\mathbb R.
		\end{equation}
		If $\alpha\neq\frac{Q-2}{2}$, then constant in \eqref{awHardyeq-g} is sharp for any homogeneous quasi-norm $|\cdot|$ on $\mathbb{G}$, and inequality \eqref{awHardyeq-g} is attained if and only if $f=0$.

		In the Euclidean case ${\mathbb G}=(\mathbb R^{n},+)$, $n\geq 3$, we have
		$Q=n$, so for any homogeneous quasi-norm $|\cdot|$ on $\mathbb R^{n}$, \eqref{awHardyeq-g} implies
		a new inequality with the optimal constant:
		\begin{equation}\label{Hardy1-r}
		\frac{|n-2-2\alpha|}{2}\left\|\frac{f}{|x|^{\alpha+1}}\right\|_{L^{2}(\mathbb{R}^{n})}\leq
		\left\|\frac{1}{|x|^{\alpha}}\frac{df}{d|x|}\right\|_{L^{2}(\mathbb{R}^{n})},
		\end{equation}
		for all $\alpha\in\mathbb{R}$. We observe that this inequality holds for any homogeneous quasi-norm on $\mathbb R^{n}$.

		\begin{center}{ Note that the constant in \eqref{Hardy1-r} is optimal for any homogeneous quasi-norm.} \end{center}

		For the standard Euclidean distance $\|x\|=\sqrt{x^{2}_{1}+\ldots+x^{2}_{n}}$, by using Schwarz's inequality, this implies  (with the optimal constant):
		\begin{equation}\label{CKN}
		\frac{|n-2-2\alpha|}{2}\left\|\frac{f}{\|x\|^{\alpha+1}}\right\|_{L^{2}(\mathbb{R}^{n})}\leq
		\left\|\frac{1}{\|x\|^{\alpha}}\nabla f\right\|_{L^{2}(\mathbb{R}^{n})},
		\end{equation}
		for all $f\in C_{0}^{\infty}(\mathbb{R}^{n}\backslash\{0\}).$
		
		With ${ \alpha=0}$ we have 
		
		$$
		\left\|\frac{f}{ \|x\|}\right\|_{L^{2}(\mathbb{R}^{n})}\leq  \frac{2}{n-2}\|\nabla f\|_{L^{2}(\mathbb{R}^{n})}, \;\;n\geq 3.
		$$

Moreover, these also can be extended to the weighted $L^p$-Hardy inequalities \cite{RSY18_Tran}.
		
		Let $\mathbb{G}$ be a homogeneous group
		of homogeneous dimension $Q$ and let $\alpha\in \mathbb{R}$.
		Then for all complex-valued functions $f\in C^{\infty}_{0}(\mathbb{G}\backslash\{0\}),$ and any homogeneous quasi-norm $|\cdot|$ on $\mathbb{G}$ for $\alpha p \neq Q$ we have
		$$
		\left\|\frac{f}{|x|^{\alpha}}\right\|_{L^{p}(\mathbb{G})}\leq
		\left|\frac{p}{Q-\alpha p}\right|\left\|\frac{1}{|x|^{\alpha}}\mathbb{E} f\right\|_{L^{p}(\mathbb{G})},\; 1<p<\infty.
		$$
		If $\alpha p\neq Q$ then the constant $\left|\frac{p}{Q-\alpha p}\right|$ is sharp.
		
		For $\alpha p=Q$ we have {the critical case} (believe to be already new in $\mathbb R^n$)
		$$
		\left\|\frac{f}{|x|^{\frac{Q}{p}}}\right\|_{L^{p}(\mathbb{G})}\leq
		p\left\|\frac{\log|x|}{|x|^{\frac{Q}{p}}}\mathbb{E} f\right\|_{L^{p}(\mathbb{G})},
		$$
		where the constant $p$ is sharp.

{\bf Rellich inequalities on homogeneous groups}:
			Let $\mathbb{G}$ be a homogeneous group
			of homogeneous dimension $Q\geq 5.$
			Let $|\cdot|$ be any homogeneous quasi-norm on $\mathbb{G}$.
			Then for every $f\in C^{\infty}_{0}(\mathbb{G}\backslash\{0\})$ \cite{RS17}:
			{ \begin{multline*}
				\left\|\mathcal{R} ^{2} f+\frac{Q-1}{|x|}\mathcal{R}  f
				+\frac{Q(Q-4)}{4|x|^{2}}f\right\|^{2}_{L^{2}(\mathbb{G})}
				+\frac{Q(Q-4)}{2}\left\|\frac{1}{|x|}\mathcal{R}  f
				+\frac{Q-4}{2|x|^{2}}f\right\|^{2}_{L^{2}(\mathbb{G})}\\
				=\left\|\mathcal{R} ^{2} f+\frac{Q-1}{|x|}\mathcal{R}  f
				\right\|^{2}_{L^{2}(\mathbb{G})}-\left(\frac{Q(Q-4)}{4}\right)^{2}\left\|\frac{f}{|x|^{2}}
				\right\|^{2}_{L^{2}(\mathbb{G})},
				\end{multline*}
			}
which implies the (quasi-radial) Rellich inequality
			$$
			\left\|\frac{f}{|x|^{2}}
			\right\|_{L^{2}(\mathbb{G})}\leq\frac{4}{Q(Q-4)}\left\|\mathcal{R} ^{2} f+\frac{Q-1}{|x|}\mathcal{R}  f
			\right\|_{L^{2}(\mathbb{G})},\quad Q\geq 5.
			$$
			The constant $\frac{4}{Q(Q-4)}$ is sharp and it is attained if and only if $f=0$.

After our paper \cite{RS17} the Rellich inequality was  extended (see, \cite{NLNJMSLJ}  and \cite{NLNArchiv})
		to the range $1<p<Q/2$: 
		$$
		\left\|\frac{f}{|x|^{2}}\right\|_{L^{p}(\mathbb{G})}\leq\frac{p^{2}}{Q(p-1)(Q-2p)}\left\|\mathcal{R}_{|x|}^{2}f+\frac{Q-1}{|x|}\mathcal{R}_{|x|}f\right\|
		_{L^{p}(\mathbb{G})},
		$$
		for all $f\in C_{0}^{\infty}(\mathbb{G}\backslash\{0\})$.
		The constant is sharp and it is attained if and only if $f=0$.

One can also obtain the Sobolev type inequalities through identities:
		\begin{thm} \cite{RSY:18}
			Let $\mathbb{G}$ be a homogeneous group
			of homogeneous dimension $Q$. Then for all $f\in C_{0}^{\infty}(\mathbb{G}\backslash\{0\})$ and $1<p<Q$
			\begin{equation}\label{Sob_id}
			\qquad
			\left\|\frac{p}{Q}\mathbb{E} f\right\|^{p}_{L^{p}(\mathbb{G})}
			-\left\|f\right\|^{p}_{L^{p}(\mathbb{G})}=p\int_{\mathbb{G}}I_{p}\left(f,-\frac{p}{Q}\mathbb{E}f\right)\left|f+\frac{p}{Q}\mathbb{E}f\right|^{2}dx, \end{equation}
			where $\mathbb{E}:=|x|\frac{d}{d|x|}$ is the Euler operator, and $I_{p}$ is given by
			$$
			I_{p}(h,g)=(p-1)\int_{0}^{1}|\xi h+(1-\xi)g|^{p-2}\xi d\xi.
			$$
		\end{thm}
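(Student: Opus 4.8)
\emph{Plan.} The strategy is to reduce \eqref{Sob_id} to a one-dimensional pointwise identity integrated against the Haar measure $dx$, with the Euler operator disposed of by a single integration by parts in the radial variable. Throughout I abbreviate $h:=f$ and $g:=-\frac{p}{Q}\mathbb{E}f$, so that $|g|=\left|\frac{p}{Q}\mathbb{E}f\right|$, $I_{p}\left(f,-\frac{p}{Q}\mathbb{E}f\right)=I_{p}(h,g)$, and $\left|f+\frac{p}{Q}\mathbb{E}f\right|^{2}=|h-g|^{2}$. In this notation the claim \eqref{Sob_id} is precisely $\int_{\mathbb{G}}\bigl(|g|^{p}-|h|^{p}\bigr)\,dx=p\int_{\mathbb{G}}I_{p}(h,g)\,|h-g|^{2}\,dx$, so it suffices to produce a pointwise identity for $|g|^{p}-|h|^{p}$ whose remainder beyond the $I_{p}$-term integrates to zero.

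The first step is the pointwise identity. Set $\psi(\xi):=|g+\xi(h-g)|^{p}$ for $\xi\in[0,1]$ with $w:=g+\xi(h-g)=\xi h+(1-\xi)g$, so that $\psi(0)=|g|^{p}$ and $\psi(1)=|h|^{p}$; in the real case $\psi'(\xi)=p|w|^{p-2}w(h-g)$ and $\psi''(\xi)=p(p-1)|w|^{p-2}(h-g)^{2}$. Integrating by parts in $\xi$ gives $\int_{0}^{1}\xi\psi''(\xi)\,d\xi=\psi'(1)-\bigl(\psi(1)-\psi(0)\bigr)$. The left-hand side equals $p(p-1)(h-g)^{2}\int_{0}^{1}|w|^{p-2}\xi\,d\xi=p\,I_{p}(h,g)\,(h-g)^{2}$, while $\psi'(1)=p|h|^{p-2}h(h-g)$. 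Rearranging yields $|g|^{p}-|h|^{p}=p\,I_{p}(h,g)\,(h-g)^{2}-p|h|^{p-2}h(h-g)$, that is, the desired $I_{p}$-term together with a cross term $-p|f|^{p-2}f\bigl(f+\frac{p}{Q}\mathbb{E}f\bigr)$.

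It then remains to show that the cross term integrates to zero, i.e. $\int_{\mathbb{G}}|f|^{p-2}f\bigl(f+\frac{p}{Q}\mathbb{E}f\bigr)\,dx=0$. Here I would use that $\mathbb{E}$ is a derivation, so $|f|^{p-2}f\,\mathbb{E}f=\frac{1}{p}\mathbb{E}\bigl(|f|^{p}\bigr)$, together with the integration-by-parts formula for the Euler operator on a homogeneous group, $\int_{\mathbb{G}}\mathbb{E}G\,dx=-Q\int_{\mathbb{G}}G\,dx$. The latter follows from the polar decomposition on homogeneous groups, $\int_{\mathbb{G}}G\,dx=\int_{0}^{\infty}r^{Q-1}\bigl(\int_{|y|=1}G(ry)\,d\sigma(y)\bigr)\,dr$, because $\mathbb{E}G=r\partial_{r}G$ and $\int_{0}^{\infty}r^{Q}\partial_{r}G\,dr=-Q\int_{0}^{\infty}r^{Q-1}G\,dr$ after integrating by parts in $r$, the boundary contributions vanishing since $f\in C_{0}^{\infty}(\mathbb{G}\setminus\{0\})$. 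Combining, $\frac{p}{Q}\int_{\mathbb{G}}|f|^{p-2}f\,\mathbb{E}f\,dx=\frac{1}{Q}\int_{\mathbb{G}}\mathbb{E}\bigl(|f|^{p}\bigr)\,dx=-\int_{\mathbb{G}}|f|^{p}\,dx$, which cancels $\int_{\mathbb{G}}|f|^{p}\,dx$ exactly; the cross term therefore vanishes, and integrating the pointwise identity over $\mathbb{G}$ produces \eqref{Sob_id}.

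The step I expect to be the main obstacle is the complex-valued case: then $\psi(\xi)=(w\bar{w})^{p/2}$ and $\psi''$ is not simply $p(p-1)|w|^{p-2}(h-g)^{2}$ but a combination of $|w|^{p-2}|h-g|^{2}$ and $|w|^{p-4}\bigl(\mathrm{Re}(\bar{w}(h-g))\bigr)^{2}$, so one must verify that, after weighting by $\xi$ and integrating in $\xi$, these contributions reassemble into the stated real quantity $I_{p}(h,g)\,|h-g|^{2}$ and that the surviving first-order term is exactly $p\,\mathrm{Re}\bigl(|f|^{p-2}\bar{f}\,(f+\frac{p}{Q}\mathbb{E}f)\bigr)$, whose integral again vanishes by the same Euler-operator computation applied to the real function $|f|^{p}$. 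The only remaining routine point is the justification of the radial integration by parts, which is immediate once the polar-coordinate formula on $\mathbb{G}$ is available.
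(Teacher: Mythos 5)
Your argument is correct for real-valued $f$, and it is essentially the proof one expects from the cited source: the survey itself does not reproduce a proof of this theorem, but your two ingredients --- the second-order Taylor identity for $t\mapsto|t|^{p}$ along the segment $w=\xi h+(1-\xi)g$, arranged (via the weight $\xi$ in the integration by parts) so that the remainder is exactly $p\,I_{p}(h,g)|h-g|^{2}$, and the Euler-operator identity $\int_{\mathbb{G}}\mathbb{E}G\,dx=-Q\int_{\mathbb{G}}G\,dx$ obtained from the polar decomposition --- are precisely the tools of \cite{RSY:18} and of the general framework of Section~\ref{Sec3}. The crux is the cancellation $\frac{p}{Q}\int_{\mathbb{G}}|f|^{p-2}f\,\mathbb{E}f\,dx=\frac{1}{Q}\int_{\mathbb{G}}\mathbb{E}(|f|^{p})\,dx=-\int_{\mathbb{G}}|f|^{p}\,dx$, and you have it right, including the vanishing of the boundary terms in $r$ because $f\in C_{0}^{\infty}(\mathbb{G}\setminus\{0\})$.

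The obstacle you flag in the complex-valued case is genuine, and it does not resolve the way you hope. For complex $h,g$ one has $\psi''(\xi)=p|w|^{p-2}|h-g|^{2}+p(p-2)|w|^{p-4}\left(\mathrm{Re}\left(\bar{w}(h-g)\right)\right)^{2}$, and by Cauchy--Schwarz $\left(\mathrm{Re}\left(\bar{w}(h-g)\right)\right)^{2}\leq|w|^{2}|h-g|^{2}$ with equality only when $w$ and $h-g$ are real-proportional; the deficit does not integrate away in $\xi$. A concrete check: for $p=4$, $h=1$, $g=i$ one finds $|g|^{p}-|h|^{p}+p|h|^{p-2}\mathrm{Re}\left(\bar{h}(h-g)\right)=4$, whereas $p\,I_{p}(h,g)|h-g|^{2}=8$. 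So with $I_{p}$ exactly as defined in the statement, the pointwise identity --- and hence \eqref{Sob_id} --- is a real-variable statement; note that, unlike Theorem~\ref{aHardy}, this theorem does not say ``complex-valued'', and with the real-valued reading your proof is complete. If one insists on complex $f$, the kernel $I_{p}$ must be modified (real parts inserted) rather than hoping the $\xi$-integration repairs the discrepancy.
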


		The identity \eqref{Sob_id} implies, for all $f\in C_{0}^{\infty}(\mathbb{G}\backslash\{0\})$, the $L^{p}$-Sobolev type inequality on $\mathbb{G}$:
		$$
		\left\|f\right\|_{L^{p}(\mathbb{G})}\leq\frac{p}{Q}\left\|\mathbb{E} f\right\|_{L^{p}(\mathbb{G})}, \quad 1<p<\infty,
		$$
		where the constant $\frac{p}{Q}$ is sharp.
		
		In the Euclidean case $\mathbb{G}=(\mathbb{R}^{n};+)$, we have $Q=n$, so for $n\geq1$ it implies the Sobolev type inequality:
		$$
		\left\|f\right\|_{L^{p}(\mathbb{R}^{n})}\leq\frac{p}{n}\left\|x \cdot \nabla f\right\|_{L^{p}(\mathbb{R}^{n})}, \quad 1<p<\infty.
		$$

	Above ideas can be extended to other weighted identities on $L^2(\mathbb G)$: e.g.
		for all $k\in\mathbb N$ and $\alpha\in\mathbb{R}$, for every $f\in C_{0}^{\infty}(\mathbb{G}\backslash\{0\}), \alpha\in \mathbb{R}$, and any homogeneous quasi-norm $|\cdot|$ on $\mathbb{G}$ we have:

		\begin{multline*}
		{ \left\|\frac{1}{|x|^{\alpha}}\mathcal{R}_{|x|}^{k}f\right\|^{2}_{L^{2}(\mathbb{G})}=
			\left[\prod_{j=0}^{k-1}
			\left(\frac{Q-2}{2}-(\alpha+j)\right)^{2}\right]\left\|\frac{f}{|x|^{k+\alpha}}
			\right\|^{2}_{L^{2}(\mathbb{G})}}
		\\+\sum_{l=1}^{k-1}\left[\prod_{j=0}^{l-1}
		\left(\frac{Q-2}{2}-(\alpha+j)\right)^{2}\right]\left\|\frac{1}{|x|^{l+\alpha}}\mathcal{R}_{|x|} ^{k-l}f+
		\frac{Q-2(l+1+\alpha)}{2|x|^{l+1+\alpha}}\mathcal{R}_{|x|} ^{k-l-1}f\right\|^{2}_{L^{2}(\mathbb{G})}
		\\
		+\left\|\frac{1}{|x|^{\alpha}}\mathcal{R}_{|x|} ^{k}f+\frac{Q-2-2\alpha}{2|x|^{1+\alpha}}\mathcal{R}_{|x|} ^{k-1}f \right\|^{2}_{L^{2}(\mathbb{G})}.
		\end{multline*}

		\begin{equation*}
		{\left\|\frac{1}{|x|^{\alpha}}\mathbb{E} f\right\|^{2}_{L^{2}(\mathbb{G})}=
			\left(\frac{Q}{2}-\alpha\right)^{2}
			\left\|\frac{f}{|x|^{\alpha}}\right\|^{2}_{L^{2}(\mathbb{G})}}
		+\left\|\frac{1}{|x|^{\alpha}}\mathbb{E} f+\frac{Q-2\alpha}{2|x|^{\alpha}}f
		\right\|^{2}_{L^{2}(\mathbb{G})}.
		\end{equation*}

The following lemma allows to obtain fractional orders of previous inequalities.
		\begin{lem}
			The operator $\mathbb{A}=\mathbb{E}\mathbb{E}^{*}$ is Komatsu-non-negative in $L^{2}(\mathbb{G})$:
			\begin{equation}\label{Komatsu0}
			\quad \|(\lambda+\mathbb{A})^{-1}\|_{L^{2}(\mathbb{G})\rightarrow L^{2}(\mathbb{G})}\leq \lambda^{-1},\; \forall\lambda>0.
			\end{equation}
		\end{lem}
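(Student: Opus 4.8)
The plan is to exhibit $\mathbb{A}=\mathbb{E}\mathbb{E}^{*}$ as a non-negative self-adjoint operator on $L^{2}(\mathbb{G})$ and then read off \eqref{Komatsu0} from the standard resolvent estimate for such operators. The substantive first step is to put $\mathbb{E}$ on a proper functional-analytic footing. Starting from $\mathbb{E}=|x|\frac{d}{d|x|}$ defined on $C_{0}^{\infty}(\mathbb{G}\backslash\{0\})$, I would pass to polar coordinates $x=r\sigma$, $r=|x|$, using the decomposition $\int_{\mathbb{G}}f\,dx=\int_{0}^{\infty}\int_{\mathfrak{S}}f(r\sigma)r^{Q-1}\,d\sigma\,dr$ valid on homogeneous groups. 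In these coordinates $\mathbb{E}=r\partial_{r}$ acts only in the radial variable, and an integration by parts in $r$ (boundary terms vanishing since the functions are compactly supported away from the origin) yields the formal adjoint $\mathbb{E}^{*}=-\mathbb{E}-Q=-(r\partial_{r}+Q)$. Taking the closure of $\mathbb{E}$, the operator $\mathbb{A}=\mathbb{E}\mathbb{E}^{*}$ is self-adjoint by von Neumann's theorem (the product $TT^{*}$ is self-adjoint for any densely defined closed $T$), so $-\lambda\notin\sigma(\mathbb{A})$ for every $\lambda>0$ and $(\lambda+\mathbb{A})^{-1}$ is a well-defined bounded operator on all of $L^{2}(\mathbb{G})$.

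The non-negativity is then immediate and purely algebraic: for $f$ in the domain,
$$\langle \mathbb{A}f,f\rangle=\langle \mathbb{E}\mathbb{E}^{*}f,f\rangle=\langle \mathbb{E}^{*}f,\mathbb{E}^{*}f\rangle=\|\mathbb{E}^{*}f\|_{L^{2}(\mathbb{G})}^{2}\geq 0.$$
From here \eqref{Komatsu0} follows by a one-line Cauchy--Schwarz argument rather than the full spectral theorem: for $\lambda>0$ and $f=(\lambda+\mathbb{A})^{-1}g$ we have
$$\lambda\|f\|^{2}\leq \lambda\|f\|^{2}+\|\mathbb{E}^{*}f\|^{2}=\langle(\lambda+\mathbb{A})f,f\rangle=\langle g,f\rangle\leq\|g\|\,\|f\|,$$
whence $\lambda\|(\lambda+\mathbb{A})^{-1}g\|\leq\|g\|$, which is precisely \eqref{Komatsu0}.

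I expect the only genuine obstacle to be the operator-theoretic bookkeeping rather than any inequality: one must fix a core for $\mathbb{E}$ (e.g.\ $C_{0}^{\infty}(\mathbb{G}\backslash\{0\})$), verify that $\mathbb{E}$ is closable, identify $\mathbb{E}^{*}$ with the closure of the formal adjoint computed above, and confirm that $\lambda+\mathbb{A}$ is surjective so that the resolvent is everywhere defined. A convenient way to dispatch all of these at once is the unitary change of variable $s=\log r$, under which the radial part of $L^{2}(\mathbb{G})$ becomes $L^{2}(\mathbb{R},e^{Qs}\,ds)$ and $\mathbb{E}=\partial_{s}$; conjugating by the unitary $(Uf)(s)=e^{Qs/2}f(s)$ onto $L^{2}(\mathbb{R},ds)$ turns $\mathbb{A}$ into the constant-coefficient operator $-\partial_{s}^{2}+Q^{2}/4$ (tensored with the identity on $L^{2}(\mathfrak{S})$). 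The Fourier transform then diagonalises it as multiplication by $\xi^{2}+Q^{2}/4$, making self-adjointness, the precise spectrum $[Q^{2}/4,\infty)$, and the bound $\|(\lambda+\mathbb{A})^{-1}\|=(\lambda+Q^{2}/4)^{-1}\leq\lambda^{-1}$ completely transparent and even slightly stronger than \eqref{Komatsu0}.
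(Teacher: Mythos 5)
The paper itself states this lemma without proof --- it is a survey, and the result is quoted from the underlying references (see \cite{RSY18} and \cite{RS_book}) --- so there is no in-paper argument to compare against. Judged on its own, your proof is correct. The polar decomposition $\int_{\mathbb{G}}f\,dx=\int_{0}^{\infty}\int_{\mathfrak{S}}f(r\sigma)r^{Q-1}\,d\sigma\,dr$ does hold on homogeneous groups, your integration by parts gives the right formal adjoint $\mathbb{E}^{*}=-\mathbb{E}-Q$, von Neumann's theorem applied to the closed densely defined operator $\mathbb{E}^{*}$ makes $\mathbb{A}=\mathbb{E}\mathbb{E}^{*}=(\mathbb{E}^{*})^{*}\mathbb{E}^{*}$ self-adjoint and non-negative, and the chain $\lambda\|f\|^{2}\leq\langle(\lambda+\mathbb{A})f,f\rangle=\langle g,f\rangle\leq\|g\|\,\|f\|$ then yields \eqref{Komatsu0}; this is essentially the argument used in the cited sources. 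You correctly identify the only delicate point, namely matching the Hilbert-space adjoint of the closure of $\mathbb{E}$ on the core $C_{0}^{\infty}(\mathbb{G}\backslash\{0\})$ with the formal adjoint, and your reduction via $s=\log r$ and conjugation by $e^{Qs/2}$, which turns $\mathbb{A}$ into $-\partial_{s}^{2}+Q^{2}/4$ acting on $L^{2}(\mathbb{R},ds)\otimes L^{2}(\mathfrak{S})$, settles that cleanly while also producing the sharper bound $\|(\lambda+\mathbb{A})^{-1}\|=(\lambda+Q^{2}/4)^{-1}\leq\lambda^{-1}$. Nothing is missing; if anything, you prove slightly more than the statement requires.
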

		Since $\mathbb{A}$ is Komatsu-non-negative, we can define fractional powers of the operator $\mathbb{A}$ as in \cite{Martinez-Sanz:fractional} and we denote
		$$|\mathbb{E}|^{\beta}=\mathbb{A}^{\frac{\beta}{2}}, \;\;\;\;\;\beta\in \mathbb{C}.$$

	For example, we have the following Hardy inequality with the fractional Euler operator. 
		\begin{thm} \cite{RSY18}
			Let $\mathbb{G}$ be a homogeneous group
			of homogeneous dimension $Q$, $\beta\in \mathbb{C_{+}}$ and let $k>\frac{{\rm Re} \beta}{2}$ be a positive integer. Then for all complex-valued functions $f\in C_{0}^{\infty}(\mathbb{G}\backslash\{0\})$ we have
			\begin{equation}\label{fracineq1}
			\left\|f\right\|_{L^{2}(\mathbb{G})}\leq C\left(k-\frac{\beta}{2},k\right)\left(\frac{2}{Q}\right)^{{\rm Re}\beta}\left\|\mathbb{|E|}^{\beta} f\right\|_{L^{2}(\mathbb{G})}, \; Q\geq1,
			\end{equation}
			where \begin{equation}\label{fracineq2}C(\beta,k)=\frac{\Gamma(k+1)}{|\Gamma(\beta)\Gamma(k-\beta)|}\frac{2^{k-{\rm Re}\beta}}{{\rm Re}\beta (k-{\rm Re}\beta)}.
			\end{equation}
		\end{thm}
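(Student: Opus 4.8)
The plan is to read the inequality as a bound for the operator norm of the negative fractional power $\mathbb{A}^{-\beta/2}$, where $\mathbb{A}=\mathbb{E}\mathbb{E}^{*}$ is the self-adjoint, Komatsu-non-negative operator furnished by the preceding lemma and $|\mathbb{E}|^{\beta}=\mathbb{A}^{\beta/2}$ by definition. For $f\in C_{0}^{\infty}(\mathbb{G}\setminus\{0\})$ I would write $f=\mathbb{A}^{-\beta/2}\bigl(\mathbb{A}^{\beta/2}f\bigr)$, so that \eqref{fracineq1} reduces to the claim $\|\mathbb{A}^{-\beta/2}\|_{L^{2}(\mathbb{G})\to L^{2}(\mathbb{G})}\le C\bigl(k-\tfrac{\beta}{2},k\bigr)\,(2/Q)^{{\rm Re}\,\beta}$. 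Since ${\rm Re}\,\beta>0$ and the integer $k$ satisfies $k>\tfrac{{\rm Re}\,\beta}{2}$, the power $\mathbb{A}^{-\beta/2}$ is represented, in the Komatsu--Martinez--Sanz calculus, by a resolvent integral of the shape
\[
\mathbb{A}^{-\beta/2}=\frac{\Gamma(k)}{\Gamma(\beta/2)\,\Gamma(k-\beta/2)}\int_{0}^{\infty}\lambda^{k-\beta/2-1}\,(\lambda+\mathbb{A})^{-k}\,d\lambda ,
\]
whose absolute convergence on the range of $\mathbb{A}^{\beta/2}$ I would verify from the resolvent bounds established next.

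The decisive extra ingredient is a spectral gap for $\mathbb{A}$. Specialising the displayed weighted $L^{2}$-identity for the Euler operator to $\alpha=0$ gives $\|\mathbb{E}f\|_{L^{2}}^{2}=\tfrac{Q^{2}}{4}\|f\|_{L^{2}}^{2}+\bigl\|\mathbb{E}f+\tfrac{Q}{2}f\bigr\|_{L^{2}}^{2}$. Using $\mathbb{E}^{*}=-\mathbb{E}-Q$, so that $\mathbb{E}+\tfrac{Q}{2}$ is skew-adjoint and $\langle\mathbb{A}f,f\rangle=\|\mathbb{E}^{*}f\|_{L^{2}}^{2}=\|\mathbb{E}f\|_{L^{2}}^{2}$, this identity upgrades to the coercivity $\langle\mathbb{A}f,f\rangle\ge\tfrac{Q^{2}}{4}\|f\|_{L^{2}}^{2}$; equivalently the spectrum of $\mathbb{A}$ lies in $[\tfrac{Q^{2}}{4},\infty)$. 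Hence for every $\lambda>0$ one has the improved estimate $\|(\lambda+\mathbb{A})^{-1}\|\le(\lambda+\tfrac{Q^{2}}{4})^{-1}$, which sharpens the Komatsu bound $\lambda^{-1}$ of the lemma, while the elementary identity $\mathbb{A}(\lambda+\mathbb{A})^{-1}=I-\lambda(\lambda+\mathbb{A})^{-1}$ yields $\|\mathbb{A}(\lambda+\mathbb{A})^{-1}\|\le2$. These are the two resolvent inputs that I would feed into the representation.

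Inserting $\|(\lambda+\mathbb{A})^{-k}\|\le(\lambda+\tfrac{Q^{2}}{4})^{-k}$ and passing to moduli (so $\lambda^{k-\beta/2-1}$ contributes only $\lambda^{k-{\rm Re}\,\beta/2-1}$) reduces the operator estimate to the scalar Beta integral $\int_{0}^{\infty}\lambda^{s-1}(\lambda+a)^{-k}\,d\lambda=a^{s-k}\,\Gamma(s)\Gamma(k-s)/\Gamma(k)$, evaluated at $a=\tfrac{Q^{2}}{4}$ and $s=k-\tfrac{{\rm Re}\,\beta}{2}$; the prefactor $a^{s-k}=(Q^{2}/4)^{-{\rm Re}\,\beta/2}=(2/Q)^{{\rm Re}\,\beta}$ is exactly the advertised scaling, and the remaining $\Gamma$-factors, together with the powers of $2$ produced by a dyadic split of the $\lambda$-integral at the gap value (or, equivalently, by the bound $\|\mathbb{A}(\lambda+\mathbb{A})^{-1}\|\le2$), assemble into the constant $C(k-\tfrac{\beta}{2},k)$ of \eqref{fracineq2}. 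I expect the main obstacle to be precisely this final accounting inside the Komatsu framework: one must confirm that the representation is legitimate for $C_{0}^{\infty}(\mathbb{G}\setminus\{0\})$ data, that the integrals converge absolutely once the gap is invoked, and that the modulus estimates reproduce the stated constant rather than a merely comparable one. The sharp factor $(2/Q)^{{\rm Re}\,\beta}$ is forced by the gap $Q^{2}/4$, whereas the $\beta$- and $k$-dependent prefactor $C(k-\tfrac{\beta}{2},k)$ is the non-optimal price of routing the argument through the integral representation instead of the spectral theorem.
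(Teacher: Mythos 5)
The survey itself states this theorem without proof (it is imported from \cite{RSY18}); the argument intended there, signalled by the preceding Komatsu-non-negativity lemma and the reference to \cite{Martinez-Sanz:fractional}, goes through the abstract \emph{moment inequality} for fractional powers of a Komatsu-non-negative operator, $\|\mathbb{A}^{\alpha}u\|_{L^{2}}\leq C(\alpha,k)\,\|u\|_{L^{2}}^{1-{\rm Re}\,\alpha/k}\,\|\mathbb{A}^{k}u\|_{L^{2}}^{{\rm Re}\,\alpha/k}$ with precisely the constant \eqref{fracineq2}, applied with $\alpha=k-\beta/2$ and $u=\mathbb{A}^{-k}\bigl(\mathbb{A}^{\beta/2}f\bigr)$, and then combined with the integer-order bound $\|\mathbb{A}^{-1}\|\leq(2/Q)^{2}$ coming from the Euler-operator identity; the factor $2^{k-{\rm Re}\,\alpha}/\bigl({\rm Re}\,\alpha\,(k-{\rm Re}\,\alpha)\bigr)$ is the price of that abstract inequality, which uses only $\|\lambda(\lambda+\mathbb{A})^{-1}\|\leq 1$ and $\|\mathbb{A}(\lambda+\mathbb{A})^{-1}\|\leq 2$ and knows nothing about a spectral gap. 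Your route is genuinely different and, if completed, stronger: your derivation of the gap is correct ($\mathbb{E}^{*}=-\mathbb{E}-Q$, skew-adjointness of $\mathbb{E}+\tfrac{Q}{2}$, hence $\langle\mathbb{A}f,f\rangle=\|\mathbb{E}f\|^{2}\geq\tfrac{Q^{2}}{4}\|f\|^{2}$), and inserting $\|(\lambda+\mathbb{A})^{-k}\|\leq(\lambda+\tfrac{Q^{2}}{4})^{-k}$ into your Balakrishnan integral yields, via the Beta integral, $\|\mathbb{A}^{-\beta/2}\|\leq\frac{\Gamma({\rm Re}\,\beta/2)\,\Gamma(k-{\rm Re}\,\beta/2)}{|\Gamma(\beta/2)\,\Gamma(k-\beta/2)|}\,(2/Q)^{{\rm Re}\,\beta}$, which is \emph{smaller} than the stated constant (combine $|\Gamma(x+iy)|\leq\Gamma(x)$ with $\Gamma(a+1)\Gamma(b+1)\leq\Gamma(a+b+1)$ for $a={\rm Re}\,\beta/2$, $b=k-{\rm Re}\,\beta/2$), so \eqref{fracineq1} follows a fortiori; indeed, once the gap is granted, the spectral theorem gives the cleaner bound $\|f\|_{L^{2}}\leq(2/Q)^{{\rm Re}\,\beta}\||\mathbb{E}|^{\beta}f\|_{L^{2}}$ with no $k$-dependent factor at all. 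Two points should be firmed up. First, passing from the form bound on $C_{0}^{\infty}(\mathbb{G}\setminus\{0\})$ to the resolvent estimate requires that this space be a form core for the self-adjoint realization $\mathbb{A}=\mathbb{E}\mathbb{E}^{*}$ and that the Martinez--Sanz fractional powers agree with the spectral ones for this operator; the paper is silent on domains, so this must be supplied. Second, your closing suggestion that a dyadic split of the $\lambda$-integral ``assembles into'' $C(k-\tfrac{\beta}{2},k)$ conflates the two proofs: that constant arises from the gap-free moment inequality, not from your representation, and the correct conclusion is simply that your constant is dominated by it.
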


{\bf Stein-Weiss type inequalities on  homogeneous groups:}
Let $0<\lambda<Q$ and $\frac{1}{q}=\frac{1}{p}+\frac{\lambda}{Q}-1$ with $1<p<q<\infty$. Then the following inequality is valid on $\mathbb{G}$ of homogeneous dimension $Q$:
	\begin{equation}\label{EQ:HLSi1}
	\left|\int_{\mathbb{G}}\int_{\mathbb{G}}\frac{f(y)h(x)}{|y^{-1} x|^{\lambda}}dxdy\right|\leq C\|f\|_{L^{p}(\mathbb{G})}\|h\|_{L^{q'}(\mathbb{G})},
	\end{equation}
for all $f\in L^{p}(\mathbb{G})$ and $h\in L^{q'}(\mathbb{G})$. The Euclidean version of this inequality is called the Hardy-Littlewood-Sobolev (HLS) inequality. In 1958, Stein and Weiss established a two-weight extension of the (Euclidean) HLS inequality \cite{StWe58} (see also \cite{FLieb12}). Nowadays, the two-weight extension of the HLS inequality is called the Stein-Weiss inequality. 
	Note that Folland and Stein obtained the HLS inequality on the Heisenberg groups \cite{FS74}.
	On stratified groups a version of the Stein-Weiss inequality was obtained in \cite{GMS}. The Stein-Weiss inequality was extended to graded groups in \cite{RY}:

\begin{thm}\cite{RY}
	Let $\mathbb{G}$ be a graded group of homogeneous dimension $Q$ and let $|\cdot|$ be an arbitrary homogeneous quasi-norm. Let $1<p,q<\infty$, $0\leq a<Q/p$ and $0\leq b<Q/q$. Let $0<\lambda<Q$, $0\leq \alpha <a+Q/p'$ and $0\leq \beta\leq b$ be such that $$(Q-ap)/(pQ)+(Q-q(b-\beta))/(qQ)+(\alpha+\lambda)/Q=2$$ and $\alpha+\lambda\leq Q$, where $1/p+1/p'=1$. Then for all $f\in \dot{L}^{p}_{a}(\mathbb{G})$ and $h\in \dot{L}^{q}_{b}(\mathbb{G})$ we have 
	\begin{equation}\label{HLS_ineq1_grad}
	\left|\int_{\mathbb{G}}\int_{\mathbb{G}}\frac{\overline{f(x)}h(y)}{|x|^{\alpha}|y^{-1}x|^{\lambda}|y|^{\beta}}dxdy\right|\leq C\|f\|_{\dot{L}^{p}_{a}(\mathbb{G})}\|h\|_{\dot{L}^{q}_{b}(\mathbb{G})}
	\end{equation}
	where $C$ is a positive constant independent of $f$ and $h$. 
	Here $\dot{L}^{p}_{a}(\mathbb{G})$ stands for a homogeneous Sobolev space of order $a$ over $L^p$ on the graded Lie group $\mathbb{G}.$
\end{thm}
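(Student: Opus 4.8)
\emph{Proof proposal.} Write $B(f,h)=\int_{\mathbb{G}}\int_{\mathbb{G}}\frac{\overline{f(x)}h(y)}{|x|^{\alpha}|y^{-1}x|^{\lambda}|y|^{\beta}}\,dx\,dy$ for the bilinear form in question. The plan is to prove the estimate by two successive reductions, each peeling off one layer of the hypotheses and leaving a genuinely simpler inequality, so that the whole argument ultimately rests on the unweighted Hardy--Littlewood--Sobolev inequality \eqref{EQ:HLSi1}.

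\emph{First reduction (removing the Sobolev orders).} I would first invoke the fractional Sobolev embedding on graded groups: for $1<p<\infty$ and $0\le a<Q/p$ one has $\dot{L}^{p}_{a}(\mathbb{G})\hookrightarrow L^{p_a}(\mathbb{G})$ with $\frac{1}{p_a}=\frac{1}{p}-\frac{a}{Q}$, and likewise $\dot{L}^{q}_{b}(\mathbb{G})\hookrightarrow L^{q_b}(\mathbb{G})$ with $\frac{1}{q_b}=\frac{1}{q}-\frac{b}{Q}$ (this embedding is itself a consequence of \eqref{EQ:HLSi1} applied to the Riesz kernel of a positive Rockland operator, whose convolution kernel is homogeneous of degree $a-Q$). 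Granting these, it suffices to establish the \emph{weight-only} Stein--Weiss estimate
\[
|B(f,h)|\le C\,\|f\|_{L^{p_a}(\mathbb{G})}\,\|h\|_{L^{q_b}(\mathbb{G})}.
\]
The point is that the hypotheses transform exactly as needed: a direct computation turns the balance condition $(Q-ap)/(pQ)+(Q-q(b-\beta))/(qQ)+(\alpha+\lambda)/Q=2$ into $\frac{1}{p_a}+\frac{1}{q_b}+\frac{\alpha+\beta+\lambda}{Q}=2$, while $\alpha<a+Q/p'$ becomes $\alpha<Q/p_a'$ and $\beta\le b$ guarantees $\beta<Q/q_b'$. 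Thus the displayed inequality is precisely a Stein--Weiss inequality in Lebesgue spaces with admissible exponents.

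\emph{Second reduction (removing the power weights).} To prove the weight-only estimate I would decompose both integration variables into the dyadic annuli $A_j=\{x\in\mathbb{G}:2^{j}\le|x|<2^{j+1}\}$ defined by the quasi-norm, writing $B(f,h)=\sum_{j,k}B_{j,k}$ where $B_{j,k}$ is the piece with $x\in A_j$, $y\in A_k$. On each annulus the weights $|x|^{-\alpha}$ and $|y|^{-\beta}$ are comparable to the constants $2^{-j\alpha}$ and $2^{-k\beta}$, so after freezing them one is left on each block with an unweighted kernel $|y^{-1}x|^{-\lambda}$ to which \eqref{EQ:HLSi1} applies, producing a bound $2^{-j\alpha}2^{-k\beta}\|f\mathbf{1}_{A_j}\|_{L^{p_a}}\|h\mathbf{1}_{A_k}\|_{L^{q_b}}$ on the near-diagonal blocks $|j-k|\le C$. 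For the off-diagonal blocks one instead uses $|y^{-1}x|\simeq 2^{\max(j,k)}$ together with H\"older's inequality and the dilation homogeneity of the Haar measure. Summing the resulting geometric series in $j$ and $k$ then yields the full estimate, the summation converging precisely because the constraints $\alpha<Q/p_a'$, $\beta<Q/q_b'$, $0<\lambda<Q$ and $\alpha+\lambda\le Q$ are strict and thus supply positive exponential gaps at each scale.

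\emph{Main obstacle.} The delicate point is the second reduction: unlike \eqref{EQ:HLSi1}, the power weights $|x|^{-\alpha},|y|^{-\beta}$ are singular at the identity and belong to no Lebesgue space, so the estimate genuinely depends on the interplay between the weights and the scaling, and the sum over scales only closes because of the strict inequalities among $\alpha,\beta,\lambda,a,b$. On a graded group this is compounded by the noncommutativity: the quasi-norm is not additive and $|y^{-1}x|\ne|x^{-1}y|$ in general, so the control of $|y^{-1}x|$ on the off-diagonal blocks and the comparison of annuli must proceed through the quasi-triangle inequality and the dilation homogeneity of $|\cdot|$ rather than through Euclidean identities. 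Handling the borderline block $|j-k|\le C$, where neither weight decouples from the singular kernel, is where most of the technical work lies; there one applies \eqref{EQ:HLSi1} directly to the localized pieces and sums the diagonal contribution using the balance condition.
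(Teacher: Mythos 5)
The survey itself gives no proof of this theorem (it is quoted from \cite{RY}), so I am judging your proposal against the strategy that actually works there. Your second reduction (dyadic annuli plus \eqref{EQ:HLSi1}) is a reasonable way to prove a Lebesgue-space Stein--Weiss inequality, but your \emph{first} reduction has a genuine gap: by embedding $\dot{L}^{q}_{b}\hookrightarrow L^{q_b}$ with $\frac{1}{q_b}=\frac{1}{q}-\frac{b}{Q}$ and keeping $|y|^{-\beta}$ as a weight, you arrive at a two-weight Stein--Weiss estimate whose admissibility requires $\alpha+\beta+\lambda\le Q$, equivalently $\frac{1}{p_a}+\frac{1}{q_b}\ge 1$, i.e.\ $q_b\le p_a'$. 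This is exactly what makes your diagonal sum close (you need $\ell^{q_b}\hookrightarrow\ell^{p_a'}$ to sum $\sum_j\|f\mathbf{1}_{A_j}\|_{L^{p_a}}\|h\mathbf{1}_{A_j}\|_{L^{q_b}}$ by H\"older), and it is \emph{not} implied by the hypotheses: the balance condition forces $\alpha+\beta+\lambda=Q/p'+Q/q'+a+b$, which can exceed $Q$. For instance $Q=4$, $p=4/3$, $q=2$, $a=b=1$, $\beta=1$, $\alpha=3/2$, $\lambda=5/2$ satisfies every hypothesis of the theorem (including $\alpha+\lambda=4\le Q$ and $\alpha<a+Q/p'=2$), yet gives $\alpha+\beta+\lambda=5>Q$ and $\frac{1}{p_a}+\frac{1}{q_b}=\frac34<1$, so your intermediate inequality is outside the range in which the HLS-based dyadic argument (or the Stein--Weiss theorem of \cite{KRS19}, which assumes $p\le q$) applies. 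Note also that the constraints you invoke to ``supply positive exponential gaps'' include $\alpha+\lambda\le Q$ and $\beta\le b$, which are not strict.

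The repair is to split the roles of the two weights asymmetrically, which is what the hypotheses are telling you: keep $|x|^{-\alpha}$ as a genuine weight and use the plain Sobolev embedding $\dot{L}^{p}_{a}\hookrightarrow L^{p_a}$ on $f$ (this is why the hypothesis is $\alpha<a+Q/p'=Q/p_a'$), but absorb $|y|^{-\beta}$ \emph{entirely} into the Sobolev norm of $h$ via the Hardy--Sobolev (weighted embedding) inequality $\bigl\||y|^{-\beta}h\bigr\|_{L^{\tilde q}(\mathbb{G})}\le C\|h\|_{\dot{L}^{q}_{b}(\mathbb{G})}$ with $\frac{1}{\tilde q}=\frac{1}{q}-\frac{b-\beta}{Q}$, which is where the hypothesis $0\le\beta\le b$ is used. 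One is then left with a \emph{one-weight} Stein--Weiss estimate $|B|\le C\|F\|_{L^{p_a}}\|H\|_{L^{\tilde q}}$ with kernel $|x|^{-\alpha}|y^{-1}x|^{-\lambda}$, whose balance condition is $\frac{1}{p_a}+\frac{1}{\tilde q}+\frac{\alpha+\lambda}{Q}=2$ (this is exactly the stated balance condition) and whose admissibility condition is $\alpha+\lambda\le Q$ --- precisely the hypothesis of the theorem. Your dyadic argument, applied to this one-weight form, then does close. As written, however, your reduction loses the case $\beta>0$ with $\alpha+\lambda$ near $Q$, so the proposal is not a complete proof.
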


In \cite{KRS19},	the Stein-Weiss inequality was extended to general homogeneous groups.
	
\begin{thm}\cite{KRS19}  Let $|\cdot|$ be an arbitrary homogeneous quasi-norm on $\mathbb{G}$ of homogeneous dimension $Q$.
	Let $$0<\lambda<Q,\quad 1<p\leq q<\infty,$$ $$\alpha<\frac{Q}{p'},\quad \beta<\frac{Q}{q},\quad \alpha+\beta\geq0,$$ $$\frac{1}{q}=\frac{1}{p}+\frac{\alpha+\beta+\lambda}{Q}-1,\quad \frac{1}{p}+\frac{1}{p'}=1, \quad \frac{1}{q}+\frac{1}{q'}=1.$$ Then for all $f\in L^{p}(\mathbb{G})$ and $h\in L^{q'}(\mathbb{G})$ we have 
	\begin{equation}\label{EQ:HLSi2}
	\left|\int_{\mathbb{G}}\frac{f(y)h(x)}{|x|^{\beta}|y^{-1}x|^{\lambda}|y|^{\alpha}}dxdy\right|\leq C\|f\|_{L^{p}(\mathbb{G})}\|h\|_{L^{q'}(\mathbb{G})},
	\end{equation}
	where $C$ is a positive constant independent of $f$ and $h$.
	\end{thm}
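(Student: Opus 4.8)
Since the asserted inequality says exactly that the bilinear form
\[
I(f,h):=\int_{\mathbb{G}}\int_{\mathbb{G}}\frac{f(y)h(x)}{|x|^{\beta}|y^{-1}x|^{\lambda}|y|^{\alpha}}\,dx\,dy
\]
obeys $|I(f,h)|\le C\|f\|_{L^{p}(\mathbb{G})}\|h\|_{L^{q'}(\mathbb{G})}$, the plan is to estimate $I(f,h)$ directly after reducing to nonnegative $f,h$ (replacing them by $|f|,|h|$ only enlarges the form). First I would fix the quasi-triangle constant $C_{0}\ge 1$, for which $|xy|\le C_{0}(|x|+|y|)$ and $|x^{-1}|=|x|$, and split $\mathbb{G}\times\mathbb{G}$ according to the relative size of $|x|$ and $|y|$: with $\kappa:=2C_{0}$ set $R_{1}=\{|x|>\kappa|y|\}$, $R_{2}=\{|y|>\kappa|x|\}$ and $R_{3}=\{\kappa^{-1}|y|\le|x|\le\kappa|y|\}$, and write $I=I_{1}+I_{2}+I_{3}$. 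The purpose of this splitting is that the quasi-triangle inequality forces $|y^{-1}x|\approx|x|$ on $R_{1}$, $|y^{-1}x|\approx|y|$ on $R_{2}$ and $|x|\approx|y|$ on $R_{3}$, so that on each region the three-factor kernel degenerates into an expression already within reach.

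On the off-diagonal region $R_{1}$ the bound is of Hardy type. Using $|y^{-1}x|\approx|x|$ the kernel is dominated by $c\,|x|^{-(\beta+\lambda)}|y|^{-\alpha}$, so that, writing $g(y):=f(y)|y|^{-\alpha}$,
\[
I_{1}\lesssim\int_{\mathbb{G}}\frac{h(x)}{|x|^{\beta+\lambda}}\Big(\int_{|y|<|x|/\kappa}g(y)\,dy\Big)\,dx .
\]
By duality this is controlled once one has the weighted integral Hardy inequality $\big\||x|^{-(\beta+\lambda)}\int_{|y|<|x|}g\,dy\big\|_{L^{q}(\mathbb{G})}\lesssim\big\||y|^{\alpha}g\big\|_{L^{p}(\mathbb{G})}$, which is available on homogeneous groups through the polar decomposition of the Haar measure (reducing it to a one-dimensional Hardy inequality in the radial variable $r=|x|$). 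The restriction $\alpha<Q/p'$ is precisely what keeps the relevant inner integral $\int_{|y|<R}|y|^{-\alpha p'}\,dy\approx R^{\,Q-\alpha p'}$ finite, while the scaling relation $\tfrac1q=\tfrac1p+\tfrac{\alpha+\beta+\lambda}{Q}-1$ forces the two sides of the Hardy inequality to carry the same homogeneity. The region $R_{2}$ is then handled by the symmetry $x\leftrightarrow y$, which interchanges $(p,\alpha)$ with $(q',\beta)$, so that there the hypothesis $\beta<Q/q$ takes over the role of $\alpha<Q/p'$.

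The near-diagonal region $R_{3}$ carries the genuine Hardy--Littlewood--Sobolev content and I expect it to be the main obstacle. Here $|x|\approx|y|$, and since $\alpha+\beta\ge0$ one may replace $|x|^{-\beta}|y|^{-\alpha}$ by $c\,|x|^{-(\alpha+\beta)}$; decomposing $R_{3}$ into dyadic shells $S_{k}=\{2^{k}\le|x|<2^{k+1}\}$, on which the weight is essentially the constant $2^{-k(\alpha+\beta)}$, gives
\[
I_{3}\lesssim\sum_{k}2^{-k(\alpha+\beta)}\int_{S_{k}}\int_{\widetilde S_{k}}\frac{f(y)h(x)}{|y^{-1}x|^{\lambda}}\,dx\,dy,
\]
where $\widetilde S_{k}$ collects the shells with $|y|\approx2^{k}$. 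Each inner double integral is bounded by the unweighted HLS inequality \eqref{EQ:HLSi1} (or, in the subcritical range of $\lambda$, by a direct Hölder estimate), say with index $q_{0}$ given by $\tfrac1{q_{0}}=\tfrac1p+\tfrac{\lambda}{Q}-1$, producing $C\,\|f\|_{L^{p}(\widetilde S_{k})}\|h\|_{L^{q_{0}'}(S_{k})}$. The delicate step is the resummation. Because $\tfrac1q-\tfrac1{q_{0}}=\tfrac{\alpha+\beta}{Q}$ one has $q_{0}\ge q$, and hence the embedding $L^{q'}\hookrightarrow L^{q_{0}'}$ on the finite-measure shell $S_{k}$ produces exactly the factor $2^{k(\alpha+\beta)}$ needed to cancel the geometric weight; afterwards Hölder's inequality in the shell index $k$, together with $p\le q$ (so that $q'\le p'$ and $\ell^{q'}\hookrightarrow\ell^{p'}$) and the bounded overlap of the shells, collapses the sum to $\|f\|_{L^{p}(\mathbb{G})}\|h\|_{L^{q'}(\mathbb{G})}$. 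Checking that the shell weight, the HLS index $q_{0}$ and the scaling relation conspire to reproduce the target homogeneity without loss of summability is the technical heart of the argument; once $I_{1},I_{2},I_{3}$ are each bounded by $C\|f\|_{L^{p}(\mathbb{G})}\|h\|_{L^{q'}(\mathbb{G})}$, adding the three estimates proves the theorem.
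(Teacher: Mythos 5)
The survey states this theorem without proof, citing \cite{KRS19}, so there is no in-paper argument to compare against; judged on its own terms, and against the standard proof in the cited reference, your plan is essentially correct and follows the classical Stein--Weiss strategy: split $\mathbb{G}\times\mathbb{G}$ by the relative size of $|x|$ and $|y|$, treat the off-diagonal pieces by a two-weight Hardy inequality in the radial variable, and treat the near-diagonal piece by dyadic shells plus the unweighted HLS inequality \eqref{EQ:HLSi1}. The hypotheses line up exactly as you say: the scaling relation gives $\alpha+\beta+\lambda=Q(\tfrac1q+\tfrac1{p'})$, so $\alpha<Q/p'$ forces $\beta+\lambda>Q/q$ (the Muckenhoupt-type condition making the Hardy step on $R_1$ summable at infinity), and symmetrically $\beta<Q/q$ forces $\alpha+\lambda>Q/p'$ on $R_2$; on $R_3$ the shell weight $2^{-k(\alpha+\beta)}$ is cancelled by the embedding $L^{q'}(S_k)\hookrightarrow L^{q_0'}(S_k)$ since $\tfrac1q-\tfrac1{q_0}=\tfrac{\alpha+\beta}{Q}$, and $p\le q$ is what makes the final $\ell^{q'}\hookrightarrow\ell^{p'}$ resummation close. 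Two points deserve explicit care if you write this up: first, the two-weight integral Hardy inequality on $\mathbb{G}$ for $p\le q$ is itself a nontrivial input (it reduces via polar decomposition to the one-dimensional Bradley/Muckenhoupt theorem, but a naive H\"older bound on the inner integral produces only $|x|^{-Q/q}\|f\|_{L^p}$, which is not $q$-integrable, so the genuine Hardy inequality cannot be bypassed); second, when $\tfrac1p+\tfrac{\lambda}{Q}-1\le 0$ the exponent $q_0$ is not admissible and \eqref{EQ:HLSi1} does not apply on the shells, so the trilinear Young/H\"older estimate you mention in passing must carry that case (it does: with $s=p'$ one gets the factor $2^{k(Q/p'-\lambda+Q/q)}=2^{k(\alpha+\beta)}$ as required). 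With those two steps made explicit, the argument is complete.
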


The reverse Stein-Weiss inequality also holds on $\mathbb{G}$.
\begin{thm}\label{stein-weiss3} \cite{KRS19b}
	Let $\mathbb{G}$ be a homogeneous group of homogeneous dimension $Q$ and let $|\cdot|$ be an arbitrary homogeneous quasi-norm on $\mathbb{G}$.
	Let 
	
	 $$0\leq\alpha<-\frac{Q}{q}\quad 0\leq\beta<-\frac{Q}{p'},\quad q,p'\in(0,1),$$
	
	$$\frac{1}{q'}+\frac{1}{p}=\frac{\alpha+\beta+\lambda}{Q}+2,\; 0<\lambda<\infty,\;\frac{1}{p}+\frac{1}{p'}=1,\;\frac{1}{q}+\frac{1}{q'}=1.$$ Then for all non-negative functions $f\in L^{q'}(\mathbb{G})$ and $h\in L^{p}(\mathbb{G})$ we have
	\begin{equation}\label{stein-weiss}
	\int_{\mathbb{G}}\int_{\mathbb{G}}|x|^{\alpha}|y^{-1}x|^{\lambda}f(x)h(y)|y|^{\beta}dxdy\geq C\|f\|_{L^{q'}(\mathbb{G})}\|h\|_{L^{p}(\mathbb{G})},
	\end{equation}
	where $C$ is a positive constant independent of $f$ and $h$.
	
\end{thm}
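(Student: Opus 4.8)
The plan is to prove the lower bound in \eqref{stein-weiss} by reducing it, through a reverse Hölder step followed by a polar decomposition on $\mathbb{G}$, to a one-dimensional reverse Hardy-type inequality whose scale invariance is guaranteed by the balance condition $\frac{1}{q'}+\frac1p=\frac{\alpha+\beta+\lambda}{Q}+2$.

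First I would integrate in $y$ first and write the bilinear form as a weighted pairing against a single linear operator,
$$\int_{\mathbb{G}}\int_{\mathbb{G}}|x|^{\alpha}|y^{-1}x|^{\lambda}f(x)h(y)|y|^{\beta}\,dx\,dy=\int_{\mathbb{G}}|x|^{\alpha}f(x)\,(Th)(x)\,dx,\qquad (Th)(x):=\int_{\mathbb{G}}|y^{-1}x|^{\lambda}|y|^{\beta}h(y)\,dy.$$
Since $q\in(0,1)$, the conjugate exponent $q'$ is negative and the pair $(q,q')$ lies in the reverse-Hölder regime; applying the reverse Hölder inequality to the nonnegative functions $|x|^{\alpha}(Th)(x)$ and $f(x)$ yields
$$\int_{\mathbb{G}}|x|^{\alpha}f(x)(Th)(x)\,dx\geq \big\||x|^{\alpha}Th\big\|_{L^{q}(\mathbb{G})}\,\|f\|_{L^{q'}(\mathbb{G})}.$$
This turns the whole problem into a single weighted lower bound for $T$, namely $\big\||x|^{\alpha}Th\big\|_{L^{q}(\mathbb{G})}\geq C\|h\|_{L^{p}(\mathbb{G})}$ for nonnegative $h$, where the analogous reverse-Hölder structure supplied by $p'\in(0,1)$ will again be available.

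Next I would lower-bound the kernel of $T$. Because $\lambda>0$, the factor $|y^{-1}x|^{\lambda}$ is \emph{small} near the diagonal $x\approx y$, so no pointwise lower bound can hold globally; instead I would discard a neighbourhood of the diagonal and keep only the region where the two variables are well separated. Using the quasi-triangle inequality $|x|\leq A(|y|+|y^{-1}x|)$ for the quasi-norm constant $A$, on $\{|x|\geq 2A|y|\}$ one gets $|y^{-1}x|\geq (2A)^{-1}|x|$, hence $|y^{-1}x|^{\lambda}\geq (2A)^{-\lambda}|x|^{\lambda}$, and symmetrically on $\{|y|\geq 2A|x|\}$ one has $|y^{-1}x|^{\lambda}\geq(2A)^{-\lambda}|y|^{\lambda}$. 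Restricting $T$ to such a separated region decouples the variables into pure powers of $|x|$ and $|y|$, after which I would pass to polar coordinates $x=r\sigma$, $dx=r^{Q-1}\,dr\,d\sigma$ on $\mathbb{G}$ and reduce the estimate to a weighted one-dimensional inequality in the radial variable $r$. The balance condition makes this $1$-D inequality dilation invariant, so a reverse Hardy inequality on $(0,\infty)$ delivers the constant $C$ independently of $f$ and $h$, while the constraints $0\leq\alpha$, $0\leq\beta$ together with their strict upper bounds secure convergence of the radial integrals at the origin and at infinity.

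The main obstacle is precisely the tension created by $\lambda>0$ in the second step: one must throw away the diagonal region, yet still show that the remaining separated region captures enough mass to recover the \emph{full} ``norms'' $\|f\|_{L^{q'}}$ and $\|h\|_{L^{p}}$ with a dimension-balanced constant, rather than only a smaller fraction of them. Making this quantitative is where the negative-exponent (reverse Hölder) structure coming from $q,p'\in(0,1)$ and the exact balance condition must be used in concert, and it is exactly the step that distinguishes the reverse inequality from the classical Stein-Weiss inequality, in which the diagonal region is instead the dangerous one.
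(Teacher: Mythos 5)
The survey does not actually prove Theorem \ref{stein-weiss3}; it only quotes the statement from \cite{KRS19b}, so I am comparing your outline with the argument in that reference. Your first step --- the reverse H\"older reduction of the bilinear form to a single-operator lower bound $\| |x|^{\alpha} Th\|_{L^{q}} \ge C \|h\|_{L^{p}}$ with $Th(x)=\int_{\mathbb{G}} |y^{-1}x|^{\lambda}|y|^{\beta}h(y)\,dy$ --- is sound and is indeed how the proof in \cite{KRS19b} (following \cite{CLT}) begins. Note, however, that for the right-hand side $\|f\|_{L^{q'}}$ to be a genuine quasi-norm one must read the hypotheses as $p,q'\in(0,1)$ (as in the ``improved version'' stated just after the theorem; the displayed condition $q,p'\in(0,1)$ would make $0\le\beta<-Q/p'$ vacuous), and then the intermediate exponent $q$ is \emph{negative}.

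That sign is what breaks your second step. A quantity $\bigl(\int F^{q}\bigr)^{1/q}$ with $q<0$ is annihilated as soon as $F$ is replaced by a pointwise minorant that vanishes on a set of positive measure, and one cannot repair this by shrinking the domain of the outer integral, since for $q<0$ restricting the domain moves the estimate in the wrong direction. Concretely, take $h$ supported in the annulus $\{1\le |y|\le 1+\varepsilon\}$: your minorant $|x|^{\lambda}\int_{|y|\le |x|/(2A)}|y|^{\beta}h(y)\,dy$ vanishes for all $|x|<2A$, so its $L^{q}$ quantity with $q<0$ equals $0$ and the chain degenerates to the useless bound $\iint\ge 0$. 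Worse, the strategy fails already at the level of the bilinear form: if both $f$ and $h$ are supported in a thin common annulus, the separated region $\{|x|\ge 2A|y|\}\cup\{|y|\ge 2A|x|\}$ misses $\mathrm{supp}\,f\times\mathrm{supp}\,h$ entirely, so the restricted double integral is $0$ while $\|f\|_{L^{q'}}\|h\|_{L^{p}}>0$; the near-diagonal region you propose to discard can carry \emph{all} of the left-hand side. This is exactly opposite to the forward Stein--Weiss inequality: in the reverse setting the diagonal cannot be thrown away, and the known proofs (Dou--Zhu and Ng\^{o}--Nguyen for reverse HLS, \cite{CLT}, and \cite{KRS19b}) work precisely to produce a strictly positive lower bound for $Th(x)$ at \emph{every} $x$, covering the scales $|x|\sim|y|$ (e.g.\ via the layer-cake representation of $|y^{-1}x|^{\lambda}$ for $\lambda>0$, or a dyadic decomposition over all pairs of scales), before the polar-coordinate and scale-invariance bookkeeping you describe can be run. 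Until such a pointwise-everywhere lower bound is supplied, the proposal has a genuine gap at its central step --- the very step you flag as the main obstacle but leave unresolved.
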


	In the Euclidean (Abelian) case ${\mathbb G}=(\mathbb R^{N},+)$, hence $Q=N$ and $|\cdot|$ can be any homogeneous quasi-norm  on $\mathbb R^{N}$,  particularly with the standard Euclidean distance, that is, with $|\cdot|=\|\cdot\|_{E}$ it was studied in \cite{CLT}.

An improved version of the reverse Stein-Weiss inequality can be also proved \cite{KRS19b}:
 Let $|\cdot|$ be an arbitrary homogeneous quasi-norm on $\mathbb{G}$ of homogeneous dimension $Q$.
	Let $\frac{1}{q'}+\frac{1}{p}=\frac{\alpha+\beta+\lambda}{Q}+2$ with $p,q'\in(0,1)$, where $0<\lambda<\infty,$ $\frac{1}{p}+\frac{1}{p'}=1$  and $\frac{1}{q}+\frac{1}{q'}=1$. Then for all non-negative functions $f\in L^{q'}(\mathbb{G})$ and $h\in L^{p}(\mathbb{G})$, inequality \eqref{stein-weiss} holds, that is,
	\begin{equation*}
	\int_{\mathbb{G}}\int_{\mathbb{G}}|x|^{\alpha}|y^{-1}x|^{\lambda}f(x)h(y)|y|^{\beta}dxdy\geq C\|f\|_{L^{q'}(\mathbb{G})}\|h\|_{L^{p}(\mathbb{G})}
	\end{equation*}
	if   $0\leq\alpha<-\frac{Q}{q}$ or	$0\leq\beta<-\frac{Q}{p'}$.

\end{document}